\newtheorem{thm}{Theorem}[section]
\newtheorem{cor}[thm]{Corollary}
\newtheorem{prop}[thm]{Proposition}
\newtheorem{fact}[thm]{Fact}
\theoremstyle{remark}
\newtheorem{rem}[thm]{Remark}
\newcommand{\rr}{\mathbb{R}}
\newcommand{\nn}{\mathbb{N}}
\newcommand{\zz}{\mathbb{Z}}
\newcommand{\ee}{\varepsilon}
\newcommand{\meg}{\geqslant}
\newcommand{\mik}{\leqslant}
\newcommand{\ave}{\mathbb{E}}
\begin{document}

\title{Uniformity norms, their weaker versions, and applications}

\author{Pandelis Dodos and Vassilis Kanellopoulos}

\address{Department of Mathematics, University of Athens, Panepistimiopolis 157 84, Athens, Greece}
\email{pdodos@math.uoa.gr}

\address{National Technical University of Athens, Faculty of Applied Sciences,
Department of Mathematics, Zografou Campus, 157 80, Athens, Greece}
\email{bkanel@math.ntua.gr}

\thanks{2010 \textit{Mathematics Subject Classification}: 11B30.}
\thanks{\textit{Key words}: uniformity norms, pseudorandom measures, Koopman--von Naumann decomposition,
inverse theorem for the uniformity norms.}


\begin{abstract}
We show that, under some mild hypotheses, the Gowers uniformity norms (both in the additive and in the hypergraph setting)
are essentially equivalent to certain weaker norms which are easier to understand. We present two applications of this equivalence:
a variant of the Koopman--von Neumann decomposition, and a proof of the relative inverse theorem for the Gowers $U^s[N]$-norm using
a norm-type pseudorandomness condition.
\end{abstract}


\maketitle

\section{Introduction}

\numberwithin{equation}{section} \label{sec1}

\subsection{Overview} \label{subsec1.1}

This note is motivated by problems in arithmetic combinatorics and related parts of Ramsey theory, and focuses on the
relation between two notions of pseudorandomness which appear in this context. The first notion is measured using the
\textit{Gowers uniformity norms} \cite{Go1,Go2}. These norms are very useful in order to  accurately count the number 
of copies of certain ``patterns" in subsets of discrete structures; see, \textit{e.g.}, \cite[Lemma 11.4]{TV}.
However, they are defined by estimating the correlation of a function with shifts of itself, and so their dual norms 
are hopelessly difficult to understand in full~generality.

To compensate this problem, one adopts a functional analytic point of view. 
First one selects a class $\mathcal{D}$ of bounded functions (the ``dual" functions), 
and then associates with $\mathcal{D}$ a norm defined by the rule
$\|f\|_{\mathcal{D}}\coloneqq \sup\big\{ |\langle f,g\rangle|: g\in\mathcal{D}\big\}$. 
If the set $\mathcal{D}$ is appropriately selected, then the norm $\|\cdot\|_{\mathcal{D}}$ 
is comparable to the Gowers uniformity norm for \textit{bounded} functions. Unfortunately, in general,
the norm $\|\cdot\|_{\mathcal{D}}$ is significantly weaker, and this apparently excludes its applicability 
in the study of sparse sets like the set of primes~numbers.

Nevertheless, relatively recently it was shown, first implicitly in \cite{CFZ} and then more explicitly 
in \cite{TZ1,TZ2}, that the Gowers uniformity norms and their aforementioned weaker versions are essentially 
equivalent for a fairly large (and practically useful) family of unbounded functions.

We analyze further this phenomenon (both in the additive and in the hypergraph setting) and we show that
it is more typical than anticipated. Compared with the results in \cite{CFZ,TZ1,TZ2} which rely on the 
``linear forms condition" (a pseudorandomness hypothesis originating from \cite{GT1}), our approach is 
more intrinsic and is based exclusively on the properties of the Gowers uniformity norms. 
In a nutshell, our main results (Propositions \ref{p2.1} and \ref{p3.1}) follow from the Gowers--Cauchy--Schwarz
inequality and a simple decomposition method introduced in \cite{DKK1}.

\subsection{Applications} \label{subsec1.2}

We present two applications of this equivalence. 

The first application---Corollary \ref{c4.4} in Section \ref{sec4}---is a variant of the Koopman--von Neumann decomposition.
It answers a question of Gowers (see page 37 in the arXiv version of \cite{Go3}), and it asserts that a real-valued 
function $f$ on a finite additive group can be approximated in a Gowers uniformity norm by a bounded function, 
provided that $|f|$ is majorized by a function $\nu$ which satisfies a natural norm-type pseudorandomness condition. 
It is important to note that, besides its intrinsic interest, this approximation
is essential for further applications. Indeed, Corollary \ref{c4.4} together with an appropriate 
version of the generalized von Neumann theorem---\textit{e.g.}, \cite[Proposition~5.3]{GT1}---provides yet 
another approach\footnote{See \cite{CFZ,Go3,Tao,Zh} for other proofs of the relative Szemer\'{e}di theorem.} 
to the relative Szemer\'{e}di theorem \cite[Theorem 3.5]{GT1}, one of the main two ingredients of 
the Green--Tao theorem \cite{GT1}. Corollary \ref{c4.4} was also recently used by Bienvenu, Shao 
and Ter\"{a}v\"{a}inen \cite{BST} in order to prove Green--Tao type results for certain sparse 
subsets of the primes which are of arithmetic interest.

The second application---which is presented in Section \ref{sec5}---is a proof of the relative inverse 
theorem for the Gowers $U^s[N]$-norm, a result which is part of the \textit{nilpotent Hardy--Littlewood method} 
invented by Green and Tao \cite{GT2}. Our approach is based on Corollary \ref{c4.4} and, as such, 
it shows that the relative inverse theorem can also be applied under a norm-type pseudorandomness condition.
(See also \cite[Theorem 8.1]{TT} for a recent quantitative refinement of this approach.)

\subsection{Notation} \label{subsec1.3}

For every positive integer $n$ we set $[n]\coloneqq \{1,\dots,n\}$, and for every nonempty finite set $V$ 
by $|V|$ we denote its cardinality. Moreover, for every function $f\colon V\to\rr$ by $\ave[f(v)\, |\, v\in V]$ 
we denote the average of~$f$, that~is,
\[ \ave[f(v)\, |\, v\in V] \coloneqq \frac{1}{|V|} \sum_{v\in V} f(v). \]
We also write $\ave_{v\in V} f(v)$ to denote the average of $f$, or simply $\ave[f]$
if the set $V$ is understood from the context.

We use the following $o(\cdot)$ and $O(\cdot)$ notation. If $a_1,\dots,a_k$ are parameters and $\eta$ is a positive real,
then we write $o_{\eta\to 0;a_1,\dots,a_k}(X)$ to denote a quantity bounded in magnitude by $X F_{a_1,\dots,a_k}(\eta)$
where $F_{a_1,\dots,a_k}$ is a function which depends on $a_1,\dots,a_k$ and goes to zero as $\eta\to 0$. Similarly, by
$O_{a_1,\dots,a_k}(X)$ we denote a quantity bounded in magnitude by $X C_{a_1,\dots,a_k}$ where $C_{a_1,\dots,a_k}$
is a positive constant depending on the parameters $a_1,\dots,a_k$; we also write $Y\ll_{a_1,\dots,a_k}\!X$ or
$X\gg_{a_1,\dots,a_k}\!Y$ for the estimate $|Y|=O_{a_1,\dots,a_k}(X)$.


\section{The Gowers uniformity norm versus its weak version}

\numberwithin{equation}{section} \label{sec2}

\subsection{ \! } \label{subsec2.1}

Let $Z$ be a finite additive group and let $s\meg 2$ be an integer. Also let $f\colon Z\to\rr$ and recall
that the \textit{Gowers uniformity norm $\|f\|_{U^s(Z)}$ of $f$} is defined by the rule
\begin{equation} \label{e2.1}
\|f\|_{U^s(Z)} \coloneqq \ave\Big[ \prod_{\omega\in\{0,1\}^s}\!\! f(x+\omega\cdot \mathbf{h})\, \Big| \,
x\in Z, \mathbf{h}\in Z^s\Big]^{1/2^s}
\end{equation}
where $\omega\cdot \mathbf{h}\coloneqq \sum_{i=1}^s \omega_i\, h_i$ for every $\omega=(\omega_i)\in\{0,1\}^s$ 
and every $\mathbf{h}=(h_i)\in Z^s$. One can also define these norms for complex-valued functions by appropriately 
inserting complex conjugation operations---see \cite{TV} for details.

As we have noted, there is a natural weak version of the $U^s(Z)$-norm. Specifically, let $f\colon Z\to\rr$ and
define\footnote{There is no standard terminology for these norms.} the \textit{weak uniformity norm 
$\|f\|_{w^s(Z)}$ of $f$} by setting
\begin{equation} \label{e2.2}
\|f\|_{w^s(Z)} \coloneqq \sup\Big\{ \ave\big[ f(x)\!\!\!\!\!\!\!\prod_{\omega\in\{0,1\}^s\setminus \{0^s\}}\!\!\!\!\!\!
h_\omega(x+\omega\cdot \mathbf{h})\, \big| \, x\in Z, \mathbf{h}\in Z^s\big]\Big\}
\end{equation}
where the above supremum is taken over all families $\langle h_\omega:\omega\in \{0,1\}^s\setminus \{0^s\}\rangle$ 
of $[-1,1]$-valued functions on $Z$, and $0^s=(0,\dots,0)\in \{0,1\}^s$ denotes the sequence of length $s$ 
taking the constant value $0$. We observe that
\begin{equation} \label{e2.3}
\|f\|_{w^s(Z)}\mik \|f\|_{U^s(Z)}
\end{equation}
as can be seen by the Gowers--Cauchy--Schwarz inequality (see, \textit{e.g.}, \cite[(11.6)]{TV}).

\subsection{The main result} \label{subsec2.2}

By \eqref{e2.1} and \eqref{e2.2}, it follows readily that for every function $f\colon Z\to [-1,1]$ 
we have $\|f\|_{U^s(Z)}\mik \|f\|^{1/2^s}_{w^s(Z)}$. The following proposition shows that the estimate 
\eqref{e2.3} can also be \textit{reversed} provided that $f$ is merely bounded in magnitude
by a function $\nu\colon Z\to\rr^+$ satisfying a norm-type pseudorandomness condition.
\begin{prop} \label{p2.1}
Let $Z$ be a finite additive group, let $s\meg 2$ be an integer, and let $0<\eta\mik 1$. 
Also let $\nu\colon Z\to \rr^+$ such that
\begin{equation} \label{e2.4}
\|\nu-1\|_{U^{2s}(Z)}\mik \eta.
\end{equation}
Finally, let $f\colon Z\to\rr$ with $|f|\mik \nu$. If\, $\|f\|_{w^s(Z)}\mik \eta$, then
\begin{equation} \label{e2.5}
\|f\|_{U^s(Z)}= o_{\eta\to 0;s}(1).
\end{equation}
\end{prop}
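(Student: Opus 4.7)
The plan is to decompose $\|f\|_{U^s(Z)}^{2^s}$ into a ``main term'' controlled by $\|f\|_{w^s(Z)}\mik\eta$ and several ``error terms'' controlled by the pseudorandomness bound $\|\nu-1\|_{U^{2s}(Z)}\mik\eta$. Since $|f|\mik\nu$, I first write $f = \nu g$, where $g\colon Z\to[-1,1]$ is defined by $g(x) = f(x)/\nu(x)$ if $\nu(x)\neq 0$ and $g(x) = 0$ otherwise. At each corner $\omega\neq 0^s$ of the Gowers cube, replace $f(x+\omega\cdot\mathbf{h})$ by $\nu(x+\omega\cdot\mathbf{h})\,g(x+\omega\cdot\mathbf{h})$ and expand $\nu = 1 + (\nu-1)$. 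This yields
\[
\|f\|_{U^s(Z)}^{2^s} = \sum_{S\subseteq\{0,1\}^s\setminus\{0^s\}} T_S,
\]
where for each $S$,
\[
T_S = \ave\Big[ f(x) \prod_{\omega\in S}(\nu-1)(x+\omega\cdot\mathbf{h}) \prod_{\omega\in\{0,1\}^s\setminus\{0^s\}} g(x+\omega\cdot\mathbf{h})\,\Big|\,x\in Z,\mathbf{h}\in Z^s\Big].
\]

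The term $T_\emptyset$ is exactly the kind of average that appears in the definition \eqref{e2.2} of $\|f\|_{w^s(Z)}$, since each $g(x+\omega\cdot\mathbf{h})$ for $\omega\neq 0^s$ is $[-1,1]$-valued; hence $|T_\emptyset|\mik\|f\|_{w^s(Z)}\mik\eta$.

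For each non-empty $S$, the strategy is to bound $|T_S|$ by a positive power of $\|\nu-1\|_{U^{2s}(Z)}\mik\eta$ via iterated Cauchy--Schwarz. Applying a Gowers--Cauchy--Schwarz step in each of the $s$ directions of the Gowers cube doubles the cube to $\{0,1\}^{2s}$ and produces an integrand in which (i) the factor $f(x)$ at $\omega = 0^s$ becomes $f(x)^{2^s}$, pointwise dominated by $\nu(x)^{2^s}$; (ii) each $g$-factor gets replaced by a $2^s$-fold product of values of $g$ at the doubled cube, whose magnitude is at most $1$; and (iii) the $(\nu-1)$-factors at $\omega\in S$ organize into a structured average involving $|S|\cdot 2^s$ shifts of $\nu-1$. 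After a further expansion $\nu = 1 + (\nu-1)$ to handle the $\nu(x)^{2^s}$ factor (using $\|\nu\|_{U^s(Z)}\mik 1+\|\nu-1\|_{U^s(Z)}\mik 1+\eta$, which follows from $|f|\mik\nu$, subadditivity, and the monotonicity $\|\cdot\|_{U^s(Z)}\mik\|\cdot\|_{U^{2s}(Z)}$), the resulting correlations of $\nu-1$ can be controlled by $\|\nu-1\|_{U^{2s}(Z)}^{c_S}\mik\eta^{c_S}$ for some positive $c_S$ depending on $|S|$.

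Assembling all estimates, $\|f\|_{U^s(Z)}^{2^s}\mik\eta + O_s(\eta^{\alpha})$ for a suitable $\alpha = \alpha(s) > 0$, hence $\|f\|_{U^s(Z)} = o_{\eta\to 0;s}(1)$, which is \eqref{e2.5}. The main obstacle is the bookkeeping in the iterated Cauchy--Schwarz: because $f$ is not $L^\infty$-bounded, the classical generalized von Neumann inequality does not apply directly, and the role of the $U^{2s}$ hypothesis is precisely to compensate---through the doubling of the Gowers cube---for the unboundedness of $f$ and the presence of the mixed factors $(\nu-1)\cdot g$, which do not interact well with Gowers norms under pointwise multiplication.
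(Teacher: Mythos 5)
Your decomposition $\|f\|_{U^s(Z)}^{2^s}=\sum_S T_S$ (i.e.\ writing $f=g+(\nu-1)g$ at each corner $\omega\neq 0^s$, with $g=f/\nu$ bounded) and the bound $|T_\emptyset|\mik\|f\|_{w^s(Z)}\mik\eta$ are fine, but the treatment of the error terms $T_S$ with $S\neq\emptyset$ has a genuine gap, and I do not see how to close it along the lines you sketch. The factor sitting at a corner $\omega\in S$ is the pointwise product $(\nu-1)g$, and the only available tools are: (a) the Gowers--Cauchy--Schwarz inequality, which would require a bound on $\|(\nu-1)g\|_{U^{2s}(Z)}$ --- not controlled by $\|\nu-1\|_{U^{2s}(Z)}$ and $\|g\|_{\ell_\infty}$, as you yourself concede at the end; and (b) a direct Cauchy--Schwarz isolating that corner, which produces quantities of the form $\ave[(\nu-1)^2\cdots]$, i.e.\ pointwise powers of $\nu-1$; these are not Gowers inner products and are not bounded under \eqref{e2.4} (for the Green--Tao majorant, $\ave[\nu^2]\to\infty$). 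The same issue sinks steps (i)--(iii) of your iterated Cauchy--Schwarz: a factor $f(x)^{2^s}\mik\nu(x)^{2^s}$ at a single point forces an estimate on $\ave[\nu^{2^s}\cdot(\cdots)]$, and expanding $\nu^{2^s}=(1+(\nu-1))^{2^s}$ yields pointwise powers $(\nu-1)^k$ with $k\meg 2$, about which \eqref{e2.4} says nothing. This is exactly the juncture at which Green and Tao invoke the linear forms condition, which the proposition is designed to avoid. (As a technical aside, iterating Gowers--Cauchy--Schwarz over the $s$ directions does not leave $f(x)^{2^s}$ at a single point; it spreads $f$ over all $2^s$ corners of the cube.)

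The paper's proof avoids ever multiplying $\nu-1$ by another function at the same point. It proves \eqref{e2.6} --- that $f$ may occupy any set $\Omega$ of corners, with arbitrary $[-1,1]$-valued functions elsewhere --- by induction on $|\Omega|$. At each step one writes the average as $\ave[fG]$ for a marginal $G$, applies Cauchy--Schwarz with weight $\nu$, and must bound $\ave[(\nu-1)G^2]$ and $\ave[G^2]$. The structural point you are missing is that squaring a marginal over $\mathbf{h}\in Z^s$ introduces a second independent variable $\mathbf{h}_2\in Z^s$, so these quantities are genuine $2s$-dimensional Gowers inner products in which each function occupies its own corner (Fact \ref{f2.2}); hence $\|\nu-1\|_{U^{2s}(Z)}$ really does control them. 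The term $\ave[G^2]$ is then split by the cut-off $\beta=2$ into a bounded piece, fed back into the induction (hence ultimately into the $w^s$ hypothesis), and a large-deviation piece controlled by the first and second moments of the majorant marginal $\mathcal{N}$ --- only squares appear, which is why $U^{2s}$ suffices. If you want to keep a direct expansion in the spirit of your $T_S$, the route that works is to first replace $f$ by a bounded model via the dense model theorem (as in Section 4, or in Conlon--Fox--Zhao), rather than the pointwise model $g=f/\nu$.
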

Proposition \ref{p2.1} can be proved arguing as in \cite[Theorem 11]{TZ1} and using slightly stronger pseudorandomness
hypotheses (see also \cite[Proposition~3.7]{TZ2} for a variant of this argument). We will give a proof using as a main tool
the following simple consequence of the Gowers--Cauchy--Schwarz inequality for the $U^{2s}(Z)$-norm which was first
observed (in a slightly less general form) in the proof of Proposition 4 in~\cite{TZ1}.
\begin{fact} \label{f2.2}
Let $Z$ be a finite additive group, and let $s\meg 2$ be an integer. Also let $g\colon Z\to \rr$, let
$\langle g^{(k)}_\omega \colon k\in\{1,2\}, \omega \in \{0,1\}^s\setminus\{0^s\}\rangle$ be a family
of real-valued functions on $Z$, and set
\[ I\coloneqq \ave\Big[ g(x) \prod_{k\in\{1,2\}}\prod_{\omega\in \{0,1\}^s\setminus\{0^s\}} \!\!\!\!\!
g^{(k)}_\omega(x+\omega\cdot \mathbf{h}_k)\,\Big|\, x\in Z, \mathbf{h}_1, \mathbf{h}_2\in Z^s\Big].\]
Then we have
\[|I |\mik \|g\|_{U^{2s}(Z)}\, \cdot\! \prod_{k\in\{1,2\}}\prod_{\omega\in \{0,1\}^s\setminus\{ 0^s\}}
\!\! \|g^{(k)}_\omega\|_{U^{2s}(Z)}.\]
\end{fact}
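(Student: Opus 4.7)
The plan is to recognize $I$ as a Gowers $U^{2s}$-inner product on the cube $\{0,1\}^{2s}$ by filling in the ``missing'' corners of the cube with the constant function $1$, and then to invoke the Gowers--Cauchy--Schwarz inequality. The only conceptual ingredient beyond routine bookkeeping is the identification $\{0,1\}^{2s}\cong \{0,1\}^s\times\{0,1\}^s$, which splits a $2s$-dimensional shift parameter $\mathbf{h}\in Z^{2s}$ into the two independent $s$-tuples $\mathbf{h}_1,\mathbf{h}_2$ appearing in the definition of $I$.

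More precisely, writing each $\tilde{\omega}\in\{0,1\}^{2s}$ as $\tilde{\omega}=(\omega_1,\omega_2)\in\{0,1\}^s\times\{0,1\}^s$ and each $\tilde{\mathbf{h}}\in Z^{2s}$ as $\tilde{\mathbf{h}}=(\mathbf{h}_1,\mathbf{h}_2)\in Z^s\times Z^s$, we have $\tilde{\omega}\cdot\tilde{\mathbf{h}}=\omega_1\cdot\mathbf{h}_1+\omega_2\cdot\mathbf{h}_2$. Define a family $\langle f_{\tilde{\omega}}:\tilde{\omega}\in\{0,1\}^{2s}\rangle$ of real-valued functions on $Z$ by setting $f_{(0^s,0^s)}\coloneqq g$; $f_{(\omega,0^s)}\coloneqq g^{(1)}_\omega$ and $f_{(0^s,\omega)}\coloneqq g^{(2)}_\omega$ for $\omega\in\{0,1\}^s\setminus\{0^s\}$; and $f_{(\omega_1,\omega_2)}\coloneqq \mathbf{1}$ (the constant function equal to $1$) whenever both $\omega_1\neq 0^s$ and $\omega_2\neq 0^s$.

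A direct comparison of the shifts then yields
\[ I=\ave\Big[\prod_{\tilde{\omega}\in\{0,1\}^{2s}} f_{\tilde{\omega}}(x+\tilde{\omega}\cdot\tilde{\mathbf{h}})\,\Big|\, x\in Z, \tilde{\mathbf{h}}\in Z^{2s}\Big], \]
that is, $I$ is the Gowers $U^{2s}(Z)$-inner product of the family $\langle f_{\tilde{\omega}}\rangle$. The Gowers--Cauchy--Schwarz inequality (see, e.g., \cite[(11.6)]{TV}) gives $|I|\mik \prod_{\tilde{\omega}}\|f_{\tilde{\omega}}\|_{U^{2s}(Z)}$. Since $\|\mathbf{1}\|_{U^{2s}(Z)}=1$, the $(2^s-1)^2$ factors coming from corners $(\omega_1,\omega_2)$ with both entries nonzero drop out, and what remains is exactly the bound claimed in the statement.

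The main ``obstacle'' is nothing more than keeping the indices straight in the identification $\{0,1\}^{2s}\cong\{0,1\}^s\times\{0,1\}^s$ and checking that the shift structure of $I$ matches that of the $U^{2s}$-inner product; once this match is made explicit, the result is a one-line consequence of Gowers--Cauchy--Schwarz together with the fact that the constant function $1$ has unit $U^{2s}$-norm.
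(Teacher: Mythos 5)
Your proof is correct and is essentially identical to the paper's own argument: both embed $I$ into a $U^{2s}$ Gowers inner product via the identification $\{0,1\}^{2s}\cong\{0,1\}^s\times\{0,1\}^s$, fill the remaining corners with the constant function $1$, and apply the Gowers--Cauchy--Schwarz inequality. Nothing to add.
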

\begin{proof}
We identify $\{0,1\}^{2s}$ with $\{0,1\}^s\times \{0,1\}^s$ and we write every $\omega\in \{0,1\}^{2s}$
as $\omega=(\omega_1,\omega_2)$ where $\omega_1,\omega_2\in\{0,1\}^s$. We define a family 
$\langle g_{\omega} \colon \omega\in \{0,1\}^{2s} \rangle$ of real-valued functions on $Z$
by setting: (i)  $g_{(0^s, 0^s)}=g$, (ii) $g_{(\omega, 0^s)}=g^{(1)}_\omega$ and  $g_{(0^s, \omega)}=g^{(2)}_\omega$
if $\omega\in \{0,1\}^s\setminus\{0^s\}$, and (iii) $g_{(\omega_1,\omega_2)}=1$ 
if $\omega_1, \omega_2\in \{0,1\}^s\setminus\{0^s\}$. Noticing that
\[ I=\ave\Big[ \prod_{\omega\in \{0,1\}^{2s}} 
g_\omega(x+\omega\cdot \mathbf{h})\,\Big|\, x\in Z, \, \mathbf{h}\in Z^{2s}\Big], \]
the result follows from the Gowers--Cauchy--Schwarz inequality.
\end{proof}
We proceed to the proof of Proposition \ref{p2.1}.
\begin{proof}[Proof of Proposition \emph{\ref{p2.1}}]
We will show that for every nonempty subset $\Omega$ of $\{0,1\}^s$  and for every (possibly empty) family
$\langle h_\omega:\omega\in \{0,1\}^s\setminus \Omega\rangle$ of $[-1,1]$-valued functions on $Z$ we
have\footnote{In \eqref{e2.6} we follow the convention that the product of an empty family of functions 
is equal to the constant function $1$.}
\begin{equation} \label{e2.6}
\ave\Big[ \prod_{\omega \in \Omega} f(x+\omega\cdot\mathbf{h})\!\!\! \prod_{\omega\in \{0,1\}^s\setminus \Omega}\!\!\!
h_\omega(x+\omega\cdot \mathbf{h}) \,\Big|\, x\in Z, \mathbf{h}\in Z^s  \Big]=o_{\eta\to 0;s}(1).
\end{equation}
Clearly, this is enough to complete the proof.

We proceed by induction on the cardinality of $\Omega$. Since the left-hand side of~\eqref{e2.6} is invariant
under permutations of the cube, the initial case $|\Omega|=1$ follows from our assumption that 
$\|f\|_{w^s(Z)}\mik \eta$. Next, let $m\in\{1,\dots, 2^s-1\}$ and assume that \eqref{e2.6} 
has been proved for every $\Omega\subseteq \{0,1\}^s$ with $|\Omega|=m$. 
Fix $\Omega'\subseteq \{0,1\}^s$ with $|\Omega'|=m+1$. By permuting the cube if necessary, 
we may assume that $0^s\in \Omega'$. Set $\Omega\coloneqq\Omega'\setminus \{0^s\}$ and notice that $|\Omega|=m$.
Also let $\langle h_\omega: \omega\in \{0,1\}^s\setminus \Omega'\rangle$ be an arbitrary family 
of $[-1,1]$-valued function on $Z$. We have to show that
\[ \ave \Big[ f(x) \, \prod_{\omega \in \Omega} f(x+\omega\cdot\mathbf{h}) \!\!\!\! 
\prod_{\omega\in \{0,1\}^s\setminus \Omega'}\!\!\!\!
h_\omega(x+\omega\cdot\mathbf{h}) \,\Big|\, x\in Z, \mathbf{h}\in Z^s\Big]=o_{\eta\to 0;s}(1) \]
or, equivalently,
\begin{equation} \label{e2.7}
\ave[f(x) G(x) \,|\, x\in Z]=o_{\eta\to 0;s}(1)
\end{equation}
where $G\colon Z\to\rr$ is the marginal defined by the rule
\begin{equation} \label{e2.8}
G(x)=\ave \Big[ \prod_{\omega \in \Omega} f(x+\omega\cdot \mathbf{h}) \!\!\!\! 
\prod_{\omega\in \{0,1\}^s\setminus \Omega'}\!\!\!\!
h_\omega(x+\omega\cdot\mathbf{h})\,\Big|\, \mathbf{h}\in Z^s\Big].
\end{equation}
Since $|f|\mik \nu$ and $\ave [\nu]\mik \|\nu\|_{U^{2s}(Z)}\mik 1+\eta$, 
by the Cauchy--Schwarz inequality, it is enough to prove that
\begin{equation} \label{e2.9}
\ave[(\nu-1)G^2]=o_{\eta\to 0;s}(1) \ \text{ and } \ \ave[G^2]=o_{\eta\to 0;s}(1).
\end{equation}
The first estimate in \eqref{e2.9} follows from Fact \ref{f2.2}
and the fact that $\|\nu-1\|_{U^{2s}(Z)}\mik \eta$; indeed, observe that
\[ |\ave[(\nu-1)G^2]|\mik \|\nu-1\|_{U^{2s}(Z)} \cdot \|f\|^{2|\Omega|}_{U^{2s}(Z)}\, \cdot \!\!\!\!\!
\prod_{\omega\in \{0,1\}^s\setminus \Omega'}\!\!\! \|h_\omega\|^2_{U^{2s}(Z)} \ll_s \eta. \]
For the second estimate, as in \cite[Theorem 7.1]{DKK1}, we will use a simple decomposition. 
Specifically, let $\beta>0$ be a cut-off parameter and write 
$G^2=\mathbf{1}_{[|G|\mik\beta]}G^2+\mathbf{1}_{[|G|>\beta]}G^2$. 
As we shall see, any value of $\beta$ greater than $1$ would suffice for the proof; 
for concreteness we will use the value $\beta=2$. By linearity of expectation, 
it is enough to show that
\begin{equation} \label{e2.10}
\ave[\mathbf{1}_{[|G|\mik 2]}G^2]=o_{\eta\to 0;s}(1) \ \text{ and } \ \ave[\mathbf{1}_{[|G|>2]}G^2]=o_{\eta\to 0;s}(1).
\end{equation}
The first part of \eqref{e2.10} can be handled easily by our inductive assumptions. Indeed, set
$h_{0^s}=\boldsymbol{1}_{[|G|\mik 2]}(G/2)$ and notice that
\[ \ave[\mathbf{1}_{[G|\mik 2]}G^2]=2 \cdot \ave\Big[h_{0^s}\big(x) 
\prod_{\omega \in \Omega} f(x+\omega\cdot\mathbf{h})\!\!\!\!\!
\prod_{\omega\in \{0,1\}^s\setminus \Omega'} \!\!\! 
h_\omega(x+\omega\cdot\mathbf{h}) \,\Big|\, x\in Z, \mathbf{h}\in Z^s\Big] \]
which is $o_{\eta\to 0;s}(1)$ since $|h_{0^s}|\mik 1$ and $\Omega\subseteq \{0,1\}^s$ satisfies $|\Omega|=m$.
For the second part of \eqref{e2.10}, observe that
\begin{equation} \label{e2.11}
\ave[\mathbf{1}_{[|G|> 2]} G^2]\mik \ave[\mathbf{1}_{[\mathcal{N}> 2]}\mathcal{N}^2]
\end{equation}
where $\mathcal{N}\colon Z\to\rr$ is defined by
$\mathcal{N}(x)=\ave[\, \prod_{\omega \in \Omega} \nu(x+\omega\cdot\mathbf{h})\,|\,\mathbf{h}\in Z^s]$.
The function $\mathcal{N}$ satisfies the following moment estimate: for every $A\subseteq Z$ and every 
$k\in \{1,2\}$ we have
\begin{equation} \label{e2.12}
 \ave[\mathbf{1}_A\mathcal{N}^k]=\mathbf{P}(A)+o_{\eta\to 0;s}(1)
\end{equation}
where $\mathbf{P}(A)=\ave[\mathbf{1}_A]=|A|/|Z|$ is the probability of $A$ with respect to 
the uniform probability measure $\mathbf{P}$ on $Z$. Indeed, since 
$|\ave[\mathbf{1}_A \mathcal{N}^k]-\mathbf{P}(A)|=|\ave[\mathbf{1}_A(\mathcal{N}^k-1)]|$, 
the estimate in \eqref{e2.12} follows from Fact \ref{f2.2}, a telescopic argument 
and the fact that $\|\nu-1\|_{U^{2s}(Z)}\mik \eta$. Now, combining \eqref{e2.11} and \eqref{e2.12}
for $k=2$ and invoking Markov's inequality, we have
\begin{eqnarray*}
\ave[\mathbf{1}_{[|G|> 2]}G^2] & \mik & \mathbf{P}\big([\mathcal{N}>2]\big) + o_{\eta\to 0;s}(1)
\mik  \mathbf{P}\big(\big[|\mathcal{N}-1|>1\big]\big) + o_{\eta\to 0;s}(1) \\
& \mik & \ave\big[|\mathcal{N}-1|\big]+ o_{\eta\to 0;s}(1).
\end{eqnarray*}
On the other hand, by \eqref{e2.12} for $k=1$, we see that
\[ \ave\big[|\mathcal{N}-1|\big]=\ave[\mathbf{1}_{[\mathcal{N}\meg 1]}(\mathcal{N}-1)]+
\ave[\mathbf{1}_{[\mathcal{N}< 1]}(1-\mathcal{N})]=o_{\eta\to 0;s}(1).\]
Therefore, $\ave[\mathbf{1}_{[|G|>2]}G^2]=o_{\eta\to 0;s}(1)$ as desired.
\end{proof}
\begin{rem} \label{r1}
It is not hard to see that the proof of Proposition \ref{p2.1} in fact yields that for every 
$0<\ee\mik 1$, if $\nu\colon Z\to\rr^+$ satisfies $\|\nu-1\|_{U^{2s}(Z)}\mik \eta$ for some 
$0<\eta\mik \ee$ and $f\colon Z\to\rr$ is such that $|f|\mik \nu$ and $\|f\|_{w^s(Z)}\mik \ee$,
then we have $\|f\|_{U^s(Z)}\ll_s \ee^{C}+o_{\eta\to 0}(1)$ where $C=(2^s\cdot 2^{2^s-1})^{-1}$.
\end{rem}
\begin{rem} \label{r2}
By appropriately modifying the proof of Proposition \ref{p2.1}, one can establish the equivalence 
between the $U^s(Z)$-norm and its weak version using more general pseudorandomness hypotheses. 
In particular, we have the following proposition which is related to \cite[Theorem 7.1]{DKK1}.
\begin{prop} \label{p2.3}
Let $Z, s, \eta$ be as in Proposition \emph{\ref{p2.1}}, let $1<p\mik \infty$, 
let $q$ denote the~con\-jugate exponent of $p$, and set  
$\ell \coloneqq \min \{2n: n\in\nn \text{ and } 2n\meg 2q\}$. Let $\nu\colon Z\to \rr^+$~such~that
\begin{equation} \label{e2.13}
\|\nu-\psi\|_{U^{\ell s}(Z)}\mik \eta
\end{equation}
where $\psi\colon Z\to\rr$ satisfies\footnote{Here, the $L_p$-norm of $\psi$ is computed using the uniform 
probability measure on $Z$, that is, $\|\psi\|_{L_p}\coloneqq \ave[\, |\psi(x)|^p\, |\, x\in Z]^{1/p}$.} 
$\|\psi\|_{L_p}\mik 1$ and $\|\psi\|_{U^{\ell s}(Z)}\mik 1$. Finally, let $f\colon Z\to \rr$ with $|f|\mik \nu$. 
If\, $\|f\|_{w^s(Z)}\mik \eta$, then
\begin{equation} \label{e2.14}
\|f\|_{U^s(Z)}= o_{\eta\to 0;s,p}(1).
\end{equation}
\end{prop}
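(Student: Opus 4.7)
The plan is to mimic the proof of Proposition~\ref{p2.1} by induction on $|\Omega|$, using the even integer $\ell$ in place of $2$ throughout and the function $\psi$ in place of the constant $1$. The base case $|\Omega|=1$ is handled by the weak-norm hypothesis exactly as before. For the inductive step I would reduce to the estimate $\ave[f(x)G(x)]=o_{\eta\to 0;s,p}(1)$, with $G$ as in \eqref{e2.8}, and then apply an $\ell$-th power H\"older inequality in place of Cauchy--Schwarz.

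First I would upgrade Fact~\ref{f2.2} to an $\ell$-fold version at level $\ell s$: identifying $\{0,1\}^{\ell s}$ with $(\{0,1\}^s)^\ell$ and placing the constant $1$ at every corner with two or more nonzero blocks, Gowers--Cauchy--Schwarz yields
\[
\Bigl|\ave\Big[g(x)\prod_{k=1}^\ell\prod_{\omega\neq 0^s}g^{(k)}_\omega(x+\omega\cdot\mathbf{h}_k)\Big]\Bigr|\mik \|g\|_{U^{\ell s}(Z)}\prod_{k=1}^\ell\prod_{\omega\neq 0^s}\|g^{(k)}_\omega\|_{U^{\ell s}(Z)}.
\]
Then the H\"older step reads
\[
|\ave[fG]|^\ell\mik \ave[\nu]^{\ell-1}\ave[\nu G^\ell],
\]
using that $\ell$ is even. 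Here $\ave[\nu]\mik |\ave[\nu-\psi]|+|\ave[\psi]|\mik \eta+\|\psi\|_{L_p}\mik 1+\eta$, and $\|f\|_{U^{\ell s}(Z)}\mik \|\nu\|_{U^{\ell s}(Z)}\mik \|\psi\|_{U^{\ell s}(Z)}+\eta\mik 1+\eta$ by $|f|\mik\nu$ and the triangle inequality. Splitting $\ave[\nu G^\ell]=\ave[(\nu-\psi)G^\ell]+\ave[\psi G^\ell]$, the generalized Fact~\ref{f2.2} bounds the first summand by $\ll_{s,p}\eta$.

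The essentially new task is to show $\ave[\psi G^\ell]=o_{\eta\to 0;s,p}(1)$, which I would treat by a double cut-off with parameters $\gamma,\beta\meg 1$:
\[
\ave[\psi G^\ell]=\ave[\mathbf{1}_{|\psi|\mik\gamma,|G|\mik\beta}\psi G^\ell]+\ave[\mathbf{1}_{|\psi|>\gamma}\psi G^\ell]+\ave[\mathbf{1}_{|\psi|\mik\gamma,|G|>\beta}\psi G^\ell].
\]
On the bounded region, setting $h_{0^s}:=\gamma^{-1}\beta^{-(\ell-1)}\mathbf{1}_{|\psi|\mik\gamma,|G|\mik\beta}\psi G^{\ell-1}$ (which is $[-1,1]$-valued) recasts the contribution as $\gamma\beta^{\ell-1}\ave[h_{0^s}(x)G(x)]$, and the latter expansion gives an instance of~\eqref{e2.6} with $|\Omega|=m$, hence is $o_{\eta\to 0;s,p}(1)$ by the inductive hypothesis. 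The $\psi$-tail uses $|\psi|\mathbf{1}_{|\psi|>\gamma}\mik |\psi|^p\gamma^{1-p}$ combined with $|G|\mik \mathcal{N}$ (where $\mathcal{N}(x):=\ave_{\mathbf{h}}\prod_{\omega\in\Omega}\nu(x+\omega\cdot\mathbf{h})$) and H\"older with the conjugate pair $(p,q)$; the $G$-tail uses $|G|\mik\mathcal{N}$ and the decomposition $\mathcal{N}=\Psi+(\mathcal{N}-\Psi)$ with $\Psi(x):=\ave_{\mathbf{h}}\prod_{\omega\in\Omega}\psi(x+\omega\cdot\mathbf{h})$, so that every mixed moment $\ave[\Psi^{k-j}(\mathcal{N}-\Psi)^j]$ with $j\meg 1$ is $\ll_{s,p}\eta^j$ by the generalized Fact~\ref{f2.2}, while the pure-$\Psi$ moments are bounded via $\|\psi\|_{L_p}\mik 1$ and $\|\psi\|_{U^{\ell s}(Z)}\mik 1$. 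Finally $\gamma,\beta$ are chosen as slowly growing functions of $1/\eta$ to balance the three contributions.

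The main obstacle is this joint tail analysis: unlike in Proposition~\ref{p2.1}, where a fixed cut-off $\beta=2$ works because $\nu$ is essentially in $L_\infty$, here $\mathcal{N}$ inherits heavy tails from $\psi\in L_p$ and one must exploit both the $U^{\ell s}$-control on $\nu-\psi$ and the $L_p\cap U^{\ell s}$-control on $\psi$. The choice $\ell\meg 2q$ in the definition of $\ell$ is precisely what makes the H\"older interpolation with the conjugate pair $(p,q)$ compatible with the Gowers control available at level $\ell s$.
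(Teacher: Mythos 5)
The paper itself does not write out a proof of Proposition \ref{p2.3}; it only asserts that one ``appropriately modifies'' the proof of Proposition \ref{p2.1}. Your overall architecture (induction on $|\Omega|$, an $\ell$-fold version of Fact \ref{f2.2} at level $\ell s$, the splitting $\nu=(\nu-\psi)+\psi$, a cut-off decomposition, and moment estimates for $\mathcal{N}$ via telescoping) is certainly the intended route. However, your execution has a concrete gap, and it originates in the decision to raise the initial H\"{o}lder step to the power $\ell$ rather than keeping the Cauchy--Schwarz exponent $2$. The inequality $|\ave[fG]|^\ell\mik \ave[\nu]^{\ell-1}\ave[\nu G^\ell]$ is correct, but it spends the \emph{entire} integrability budget of $G$ (namely $G\in L_\ell$, coming from $\ave[\mathcal{N}^\ell]\mik 1+O_{s,p}(\eta)$) on the single factor $G^\ell$, leaving nothing to pair against the unbounded function $\psi$. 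This breaks both of your tail estimates. For the $\psi$-tail, $\ave\big[|\psi|\mathbf{1}_{[|\psi|>\gamma]}|G|^\ell\big]$ cannot be bounded by either tool you name: the pointwise bound $|\psi|\mathbf{1}_{[|\psi|>\gamma]}\mik \gamma^{1-p}|\psi|^p$ leads to $\gamma^{1-p}\ave[|\psi|^p\mathcal{N}^\ell]$, a mixed moment in which $|\psi|^p$ already exhausts the $L_p$ hypothesis while $\mathcal{N}^\ell$ is not in $L_\infty$; and H\"{o}lder with the pair $(p,q)$ produces $\ave[\mathcal{N}^{\ell q}]^{1/q}$, a moment of order $\ell q>\ell$ that the hypothesis $\|\nu-\psi\|_{U^{\ell s}(Z)}\mik\eta$ does not control. (Note also that $\ave[|\psi|^p\mathbf{1}_{[|\psi|>\gamma]}]$ is \emph{not} uniformly small over all $\psi$ with $\|\psi\|_{L_p}\mik 1$, so no variant relying on that quantity can give a bound depending only on $\eta,s,p$.) For the $G$-tail, your analysis of $\ave[\mathbf{1}_{[\mathcal{N}>\beta]}\mathcal{N}^\ell]$ via $\mathcal{N}=\Psi+(\mathcal{N}-\Psi)$ only yields $\ave[\mathcal{N}^\ell]\mik 1+O_{s,p}(\eta)$; since $\ell$ is the \emph{top} available moment, restricting to the event $[\mathcal{N}>\beta]$ gains no power of $\beta$, and the main term $\ave[\mathbf{1}_{[\mathcal{N}>\beta]}\Psi^\ell]$ need not be small because $\Psi^\ell$ is merely bounded in $L_1$, hence not uniformly integrable. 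This is exactly the point where Proposition \ref{p2.1} used $\psi\equiv 1$ (so that $\Psi^\ell\equiv 1$ and Markov's inequality applies); that step does not survive your generalization.

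The repair is to keep the exponent $2$: Cauchy--Schwarz gives $|\ave[fG]|^2\mik\ave[\nu]\,\ave[\nu G^2]$, the term $\ave[(\nu-\psi)G^2]$ is $O_{s}(\eta)$ by Fact \ref{f2.2} (level $2s\mik\ell s$), and the problematic term becomes $\ave[\psi G^2]$, whose tail is controlled by
\[
\ave\big[|\psi|\mathbf{1}_{[\mathcal{N}>\beta]}\mathcal{N}^2\big]\mik\|\psi\|_{L_p}\,\ave\big[\mathbf{1}_{[\mathcal{N}>\beta]}\mathcal{N}^{2q}\big]^{1/q},
\]
where now $\mathbf{1}_{[\mathcal{N}>\beta]}\mathcal{N}^{2q}\mik\beta^{2q-\ell}\mathcal{N}^{\ell}$ on the tail event, so the bounded $\ell$-th moment of $\mathcal{N}$ \emph{does} produce decay in $\beta$ (this is precisely why $\ell$ is defined so that $2q\mik\ell$, and why the full strength $\ell s$, accommodating $\ell$ copies of the marginal in the Gowers--Cauchy--Schwarz inequality, is needed only for the moment bound $\ave[\mathcal{N}^\ell]=O(1)$ and not for the correlation with $f$ itself). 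Correspondingly, the bounded region should be cut as $\mathbf{1}_{[|\psi|\mik\gamma,\,|G|\mik\beta]}\psi G^2=\gamma\beta\cdot h_{0^s}G$ with $h_{0^s}$ a $[-1,1]$-valued function, feeding into the inductive hypothesis as you describe. One residual point deserves attention even after this fix: when $q$ is an integer one has $\ell=2q$ exactly and $\beta^{2q-\ell}=1$, so the boundary case requires an extra argument (e.g.\ a slightly more careful interpolation or treating $\mathcal{N}^{2q}$ as the top moment with an additional cut-off on $\psi$); you should not leave this case implicit.
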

Observe that Proposition \ref{p2.1} corresponds to the case ``$p=\infty$" and ``$\psi=1$"\!. Also note that, 
by H\"{o}lder's inequality,
if $p$ is sufficiently large, then the estimate $\|\psi\|_{U^{\ell s}(Z)}\mik 1$ follows from the estimate $\|\psi\|_{L_p}\mik 1$.
\end{rem}


\section{The box norm versus the cut norm} \label{sec3}

\numberwithin{equation}{section}

\subsection{ \! } \label{subsec3.1} 

Let $V$ be a nonempty finite set and let $s\meg 2$ be an integer. Also let $F\colon V^s\to\rr$ and recall that the
\textit{box norm $\|F\|_{\square(V^s)}$ of~$F$} is defined by the rule
\begin{equation} \label{e3.1}
\|F\|_{\square(V^s)}\coloneqq \ave\Big[ \prod_{\omega\in [2]^s} F\big( \pi_\omega(x)\big)\,\Big|\, x\in V^{s\times 2} \Big]^{1/2^s}
\end{equation}
where for every $\omega=(\omega_i)\in [2]^s$ by $\pi_\omega\colon V^{s\times 2}\to V^s$ we denote the projection
$\pi_\omega\big((x_{ij})\big)=(x_{i\,\omega_i})_{i=1}^s$. These norms are the abstract versions of the Gowers uniformity norms;
indeed, notice that for every finite additive group $Z$ and every $f\colon Z\to\rr$ we have
\begin{equation} \label{e3.2}
\|f\|_{U^s(Z)}=\|f(x_1+\cdots+x_s)\|_{\square(Z^s)}.
\end{equation}
We will also work with the following slight variants of the box norms which first appeared in \cite{Ha}: for every even integer
$\ell\meg 2$ we define the \textit{$\ell$-box norm $\|F\|_{\square_\ell(V^s)}$ of\, $F$} by setting
\begin{equation} \label{e3.3}
\|F\|_{\square_\ell(V^s)}\coloneqq \ave\Big[ \prod_{\omega\in [\ell]^s} 
F\big( \pi_\omega(x)\big)\,\Big|\, x\in V^{s\times \ell} \Big]^{1/\ell^s}
\end{equation}
where, as above, for every $\omega=(\omega_i)\in [\ell]^s$ by $\pi_\omega\colon V^{s\times \ell}\to V^s$ we denote the projection
$\pi_\omega\big((x_{ij})\big)=(x_{i\,\omega_i})_{i=1}^s$. Clearly, the $\square_2(V^s)$-norm coincides with the $\square(V^s)$-norm.
As the parameter $\ell$ increases, the quantity $\|F\|_{\square_\ell(V^s)}$ also increases and measures the integrability of $F$.
In particular, for bounded functions all these norms are essentially equivalent. This fact, together with some basic properties
of the $\ell$-box norms, are discussed in the appendix.

The box norm also has a natural weak version which is known as the \textit{cut norm} and originates from \cite{FK}.
Specifically, let $V, s$ and $F$ be as above, and define\footnote{In several places in the literature, the cut norm is defined
by taking the supremum in \eqref{e3.4} over all families $\langle H_\omega:\omega\in [2]^s\setminus \{1^s\}\rangle$ of
$[0,1]\text{-valued}$ functions on $V^s$. However, it is clear that this more restrictive definition yields an equivalent norm.}
the \textit{cut norm $\|F\|_{\mathrm{cut}(V^s)}$ of\, $F$} by the rule
\begin{equation} \label{e3.4}
\|F\|_{\mathrm{cut}(V^s)}\coloneqq \sup\Big\{ \ave\big[ F\big(\pi_{1^s}(x)\big)\!\!\!\!\! 
\prod_{\omega\in [2]^s\setminus \{1^s\}}\!\!\!
H_\omega\big( \pi_\omega(x)\big)\,\big|\, x\in V^{s\times 2} \big] \Big\}
\end{equation}
where the above supremum is taken over all families $\langle H_\omega:\omega\in [2]^s\setminus \{1^s\}\rangle$ of
$[-1,1]\text{-valued}$ functions on $V^s$, and $1^s=(1,\dots,1)\in [2]^s$ denotes the sequence of length $s$ 
taking the constant value $1$. By the Gowers--Cauchy--Schwarz inequality for the $\square(V^s)$-norm, 
\begin{equation} \label{e3.5}
\|F\|_{\mathrm{cut}(V^s)} \mik \|F\|_{\square(V^s)}.
\end{equation}
Also observe that if $F$ is $[-1,1]$-valued, then $\|F\|_{\square(V^s)}\mik \|F\|^{1/2^s}_{\mathrm{cut}(V^s)}$.

\subsection{The main result} \label{subsec3.2}

The following proposition is the analogue of Proposition~\ref{p2.1} and establishes the equivalence 
of the box norm with the cut norm.
\begin{prop} \label{p3.1}
Let $V$ be a nonempty finite set, let $s\meg 2$ be an integer, and let $0<\eta\mik 1$. 
Also let $\nu\colon V^s\to \rr^+$ such that
\begin{equation} \label{e3.6}
\|\nu-1\|_{\square_4(V^s)}\mik \eta.
\end{equation}
Finally, let $F\colon V^s\to\rr$ with $|F|\mik \nu$. If\, $\|F\|_{\mathrm{cut}(V^s)}\mik \eta$, then
\begin{equation} \label{e3.7}
\|F\|_{\square(V^s)}= o_{\eta\to 0;s}(1).
\end{equation}
\end{prop}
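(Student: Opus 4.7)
The plan is to transplant the proof of Proposition \ref{p2.1} to the hypergraph setting, with the $U^{2s}(Z)$-norm replaced everywhere by the $\square_4(V^s)$-norm. The key preliminary step is a box-norm analogue of Fact \ref{f2.2}: for any $G\colon V^s\to\rr$ and any family $\langle G^{(k)}_\omega:k\in\{1,2\},\,\omega\in[2]^s\setminus\{1^s\}\rangle$ of real-valued functions on $V^s$, the three-column mixed average
\[ J \coloneqq \ave\Big[G(y)\prod_{k\in\{1,2\}}\prod_{\omega\in[2]^s\setminus\{1^s\}} G^{(k)}_\omega\big(\pi_\omega(y,z^{(k)})\big)\,\Big|\,y,z^{(1)},z^{(2)}\in V^s\Big], \]
where $\pi_\omega(y,z)\coloneqq(y_i$ if $\omega_i=1$, $z_i$ if $\omega_i=2)_{i=1}^s$, satisfies $|J|\mik \|G\|_{\square_4(V^s)}\cdot\prod_{k,\omega}\|G^{(k)}_\omega\|_{\square_4(V^s)}$. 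This is proved as in Fact \ref{f2.2}: identifying $[4]^s$ with $[2]^s\times[2]^s$ via $[4]\cong[2]\times[2]$, one realizes $J$ as an average over $V^{s\times 4}$ of a product of $4^s=2^{2s}$ functions, of which the $2^{s+1}-1$ distinguished ones are $G$ and the $G^{(k)}_\omega$ (placed at the $[4]^s$-indices $(1^s,1^s)$, $(\omega,1^s)$ and $(1^s,\omega)$ respectively), while the remaining $(2^s-1)^2$ are constantly $1$. The product is then independent of the fourth column, so the $V^{s\times 4}$-average reduces to $J$, and the Gowers--Cauchy--Schwarz inequality for the $\square_4(V^s)$-norm yields the claimed bound.

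With this ingredient in hand, I would prove by induction on $|\Omega|$ that for every nonempty $\Omega\subseteq [2]^s$ and every family $\langle H_\omega:\omega\in[2]^s\setminus\Omega\rangle$ of $[-1,1]$-valued functions on $V^s$,
\[ \ave\Big[\prod_{\omega\in\Omega}F(\pi_\omega(x))\prod_{\omega\in[2]^s\setminus\Omega}H_\omega(\pi_\omega(x))\,\Big|\,x\in V^{s\times 2}\Big] = o_{\eta\to 0;s}(1); \]
specializing to $\Omega=[2]^s$ then gives \eqref{e3.7}. The base case $|\Omega|=1$ reduces to the cut-norm hypothesis upon permuting the factor $[2]$ in each coordinate direction, an operation that leaves the average invariant. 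For the inductive step at $|\Omega'|=m+1$, I would permute so that $1^s\in\Omega'$, set $\Omega\coloneqq\Omega'\setminus\{1^s\}$, write $y,z$ for the first and second columns of $x$, and observe that the expression factors as $\ave_y[F(y)G(y)]$ with $G(y)=\ave_z[\prod_{\omega\in\Omega}F(\pi_\omega(y,z))\prod_{\omega\in[2]^s\setminus\Omega'}H_\omega(\pi_\omega(y,z))]$. A $\nu$-weighted Cauchy--Schwarz, combined with $\ave[\nu]\mik 1+\eta$ (itself a consequence of $|\ave[\nu-1]|\mik\|\nu-1\|_{\square_4(V^s)}\mik\eta$), reduces matters to showing $\ave[(\nu-1)G^2]=o_{\eta\to 0;s}(1)$ and $\ave[G^2]=o_{\eta\to 0;s}(1)$.

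The first of these is $\ll_s\eta$ by the box analogue of Fact \ref{f2.2} above, applied with $\nu-1$ in the role of $G$ and with the $F$'s and $H_\omega$'s (repeated for $k=1,2$) in the role of the $G^{(k)}_\omega$'s; here I use $\|F\|_{\square_4(V^s)}\mik\|\nu\|_{\square_4(V^s)}\mik 1+\eta$ and $\|H_\omega\|_{\square_4(V^s)}\mik 1$. The second is handled by the same decomposition trick as in the additive case: writing $G^2=\mathbf{1}_{[|G|\mik 2]}G^2+\mathbf{1}_{[|G|>2]}G^2$, the bounded piece is $o_{\eta\to 0;s}(1)$ by the inductive hypothesis for $|\Omega|=m$, with $H_{1^s}\coloneqq\mathbf{1}_{[|G|\mik 2]}(G/2)$ serving as a new $[-1,1]$-valued function; the unbounded piece is controlled by $\ave[\mathbf{1}_{[\mathcal{N}>2]}\mathcal{N}^2]$ for the natural majorant $\mathcal{N}(y)=\ave_z\prod_{\omega\in\Omega}\nu(\pi_\omega(y,z))$, and the moment estimate $\ave[\mathbf{1}_A\mathcal{N}^k]=\mathbf{P}(A)+o_{\eta\to 0;s}(1)$ for $k\in\{1,2\}$, obtained via a telescopic $\nu=1+(\nu-1)$ expansion together with the box analogue of Fact \ref{f2.2}, closes the argument through Markov's inequality. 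The main obstacle I anticipate is the bookkeeping behind the combinatorial identification $[4]^s\cong[2]^s\times[2]^s$ that underlies the box analogue of Fact \ref{f2.2}; once that is in place, the remainder of the proof is a direct transcription of the argument for Proposition \ref{p2.1}.
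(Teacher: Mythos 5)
Your proposal is correct and is essentially the paper's own argument: the paper proves Proposition \ref{p3.1} exactly by stating the box-norm analogue of Fact \ref{f2.2} (this is Fact \ref{f3.2}, whose proof via placing $G$ at $1^s$, the $G^{(1)}_\omega$ at $[2]^s\setminus\{1^s\}$, the $G^{(2)}_\omega$ at $\{1,3\}^s\setminus\{1^s\}$, and $1$ elsewhere in $[4]^s$ coincides with your identification $[4]^s\cong[2]^s\times[2]^s$) and then transcribing the induction from Proposition \ref{p2.1} verbatim. Your transcription of that induction (base case via column swaps, the marginal $G$, the $\nu$-weighted Cauchy--Schwarz, the cut-off at $\beta=2$, the majorant $\mathcal{N}$ and its moment estimates) is faithful and complete.
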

It is possible to prove Proposition \ref{p3.1} arguing as in \cite[Theorem 2.17]{CFZ}. 
However, as the reader has probably already noticed, Proposition \ref{p3.1} can be proved 
arguing precisely as in Proposition \ref{p2.1}, using instead of Fact \ref{f2.2} the
following elementary consequence of the  Gowers--Cauchy--Schwarz inequality for the $\square_4(V^s)$-norm.
\begin{fact} \label{f3.2}
Let $V$ be a nonempty finite set, and let $s\meg 2$ be an integer. Also let $G\colon V^s\to \rr$, let
$\langle G^{(k)}_\omega \colon k\in\{1,2\}, \omega \in [2]^s\setminus\{1^s\}\rangle$ be a family 
of real-valued functions on $V^s$, and set\footnote{Here, we identify $V^{s\times 2}$ with $V^s\times V^s$
via the bijection $V^{s\times 2}\! \ni (x_{ij})\mapsto \big((x_{i1}),(x_{i2})\big)\!\in V^s\times V^s$. 
In particular, we write uniquely every $x\in V^{s\times 2}$ as $x=(y,z)\in V^s\times V^s$.}
\[ I\coloneqq \ave\Big[ G(y) \prod_{k\in\{1,2\}}\prod_{\omega\in [2]^s\setminus\{1^s\}}\!\!
G^{(k)}_\omega\big(\pi_\omega(y,z_k)\big)\,\Big|\, y,z_1, z_2\in V^{s}\Big].\]
Then we have
\[|I |\mik \|G\|_{\square_4(V^s)}\, \cdot \! \prod_{k\in\{1,2\}}
\prod_{\omega\in [2]^s\setminus\{ 1^s\}} \!\! \|G^{(k)}_\omega\|_{\square_4(V^s)}.\]
\end{fact}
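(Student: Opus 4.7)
The plan is to mimic the proof of Fact~\ref{f2.2}, with the $\square_4(V^s)$-norm playing the role that the $U^{2s}(Z)$-norm played there. The combinatorial ingredient that drives the argument is the identification of $[4]^s$ with $[2]^s\times[2]^s$ induced by the bijection $[4]\leftrightarrow[2]\times[2]$ that sends $1,2,3,4$ to $(1,1),(2,1),(1,2),(2,2)$ respectively. This also allows one to view $V^{s\times 4}$ as $(V^s)^4$; writing a generic element as $(y,z_1,z_2,w)$ with $y,z_1,z_2,w\in V^s$, the projection $\pi_\omega$ corresponding to $\omega\leftrightarrow(\alpha,\beta)$ picks, in coordinate $i$, from $y_i,(z_1)_i,(z_2)_i,w_i$ according to whether $(\alpha_i,\beta_i)$ equals $(1,1),(2,1),(1,2),(2,2)$.

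The second step is to define a family $\langle F_\omega:\omega\in[4]^s\rangle$ of real-valued functions on $V^s$ so that $I$ becomes an average of the form $\ave\big[\prod_\omega F_\omega\circ\pi_\omega\big]$ over $V^{s\times 4}$. Under the identification $\omega\leftrightarrow(\alpha,\beta)\in[2]^s\times[2]^s$, set $F_{(1^s,1^s)}\coloneqq G$; for $\alpha\in[2]^s\setminus\{1^s\}$ set $F_{(\alpha,1^s)}\coloneqq G^{(1)}_\alpha$ and $F_{(1^s,\alpha)}\coloneqq G^{(2)}_\alpha$; and set $F_{(\alpha,\beta)}\equiv 1$ whenever both $\alpha,\beta\in[2]^s\setminus\{1^s\}$. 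A direct inspection using the recipe for $\pi_\omega$ above shows that each factor $F_\omega(\pi_\omega(y,z_1,z_2,w))$ actually depends only on the variables occurring in its counterpart inside $I$ (in particular, the $w$ variable drops out of the entire product), and consequently
\[ \ave\Big[\prod_{\omega\in[4]^s} F_\omega(\pi_\omega(y,z_1,z_2,w))\,\Big|\,y,z_1,z_2,w\in V^s\Big]=I. \]

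The proof is then finished by invoking the Gowers--Cauchy--Schwarz inequality for the $\square_4(V^s)$-norm, a standard consequence of iterated Cauchy--Schwarz which is among the basic properties of the $\ell$-box norms discussed in the appendix. This inequality bounds the displayed average in absolute value by $\prod_{\omega\in[4]^s}\|F_\omega\|_{\square_4(V^s)}$, and since the constant function~$1$ has $\square_4(V^s)$-norm equal to $1$, all factors corresponding to pairs $(\alpha,\beta)$ with both entries different from $1^s$ contribute trivially. What remains is exactly the claimed bound.

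The only delicate point in the whole argument is the bookkeeping that matches the integrand of $I$ with the $\square_4(V^s)$-expression under the identification $[4]^s\cong[2]^s\times[2]^s$; I expect this to be the main step requiring care, but I do not anticipate any serious obstacle beyond it.
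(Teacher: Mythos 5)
Your proposal is correct and follows essentially the same route as the paper: the identification $[4]^s\cong[2]^s\times[2]^s$ places $G$ at $(1^s,1^s)$, the $G^{(1)}_\omega$ along one axis and the $G^{(2)}_\omega$ along the other (the paper phrases this as embedding the two index sets into $\{1,2\}^s$ and $\{1,3\}^s$ inside $[4]^s$), pads the remaining positions with the constant function $1$, and concludes by the Gowers--Cauchy--Schwarz inequality for the $\square_4(V^s)$-norm. The bookkeeping you flag as the delicate point checks out, including the fact that the fourth column variable $w$ is absent from every non-constant factor.
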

\begin{proof}
Define a map $\{1,3\}^s\setminus \{1^s\}\ni \omega=(\omega_i)\mapsto \omega'=(\omega'_i)\in [2]^s\setminus\{1^s\}$ 
by setting $\omega'_i=1$ if $\omega_i=1$, and $\omega'_i=2$ if $\omega_i=3$. Then we may write
\[ I=\ave\Big[ G_{1^s}\big(\pi_{1^s}(x)\big) \!\! \prod_{\omega\in [2]^s\setminus\{1^s\}} \!\!\!\!\! 
G_\omega\big(\pi_{\omega} (x)\big) \!\!
\prod_{\omega\in \{1,3\}^s\setminus\{1^s\}} \!\!\!\!\! 
G_\omega\big(\pi_{\omega} (x)\big)\,\Big|\,  x\in V^{s\times 4}\Big] \]
where we have $G_{1^s}= G$, $G_\omega= G^{(1)}_\omega$ for every 
$\omega\in [2]^s\setminus \{1^s\}$, and $G_\omega= G^{(2)}_{\omega'}$ for every
$\omega\in \{1,3\}^s\setminus\{1^s\}$. Thus, setting $G_{\omega}=1$ for all other $\omega\in [4]^s$, we see that
\[ I=\ave\Big[ \prod_{\omega\in [4]^s} G_\omega\big(\pi_\omega(x)\big)\,\Big|\, x\in V^{s\times 4}\Big] \]
and the result follows from the Gowers--Cauchy--Schwarz inequality.
\end{proof}
\begin{rem} \label{r3}
We point out that Proposition \ref{p2.3} can also be extended in the hypergraph setting. Specifically, 
we have the following proposition; see \cite[Section 7]{DKK1} for further results in this direction.
\begin{prop} \label{p3.3}
Let $V, s, \eta$ be as in Proposition \emph{\ref{p3.1}}, let $1<p\mik \infty$, let $q$ denote the conjugate 
exponent of $p$, and set $\ell\coloneqq \min\{2n: n\in\nn \text{ and } 2n\meg 2q+2\}$. 
Also let $\nu\colon V^s\to \rr^+$ such that
\begin{equation} \label{e3.8}
\|\nu-\psi\|_{\square_\ell(V^s)}\mik \eta
\end{equation}
where $\psi\colon V^s\to\rr$ satisfies\footnote{Here, as in Proposition \ref{p2.3}, the $L_p$-norm of $\psi$ 
is computed using the uniform probability measure on $V^s$, that is, 
$\|\psi\|_{L_p}\coloneqq \ave[\, |\psi(x)|^p\, |\, x\in V^s]^{1/p}$.} $\|\psi\|_{L_p}\mik 1$ and 
$\|\psi\|_{\square_\ell(V^s)}\mik 1$. Finally, let $F\colon V^s\to \rr$ with $|F|\mik \nu$.
If\, $\|F\|_{\mathrm{cut}(V^s)}\mik \eta$, then
\begin{equation} \label{e3.9}
\|F\|_{\square(V^s)}= o_{\eta\to 0;s,p}(1).
\end{equation}
\end{prop}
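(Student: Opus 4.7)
The plan is to follow the proof of Proposition \ref{p3.1} mutatis mutandis, with two systematic modifications that account for the fact that $\psi$ is only bounded in $L_p$ rather than in $L_\infty$. First, I would establish the natural $\square_\ell(V^s)$-generalization of Fact \ref{f3.2}: for every integer $m$ with $1\mik m\mik\ell-1$, every $g\colon V^s\to\rr$ and every family $\langle g^{(k)}_\omega \colon k\in [m],\, \omega\in [2]^s\setminus\{1^s\}\rangle$ of real-valued functions on $V^s$, one has
\[ \Big|\ave\Big[g(y) \prod_{k\in [m]} \prod_{\omega\in [2]^s\setminus\{1^s\}} g^{(k)}_\omega\big(\pi_\omega(y,z_k)\big)\,\Big|\, y, z_1,\dots,z_m\in V^s\Big]\Big| \mik \|g\|_{\square_\ell(V^s)} \prod_{k,\omega} \|g^{(k)}_\omega\|_{\square_\ell(V^s)}. \]
The proof mimics that of Fact \ref{f3.2}: embed the labels $\{1,k+1\}^s\setminus\{1^s\}$ for $k\in [m]$ disjointly into $[\ell]^s$ (they meet only at $1^s$, where $g$ is placed), pad the remaining labels with the constant function $1$, and invoke the Gowers--Cauchy--Schwarz inequality for $\square_\ell(V^s)$; the condition $m\mik\ell-1$ is precisely what permits this embedding.

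With this tool in hand, the induction on $|\Omega|$ from the proof of Proposition \ref{p3.1} goes through essentially verbatim. The base case $|\Omega|=1$ follows from $\|F\|_{\mathrm{cut}(V^s)}\mik\eta$. In the inductive step, after isolating $F(\pi_{1^s}(x))=F(y)$, matters reduce to showing $\ave[F(y)G(y)]=o_{\eta\to 0;s,p}(1)$ for the associated marginal $G$. Weighted Cauchy--Schwarz with weight $\nu$ gives $|\ave[FG]|^2\mik\ave[\nu]\cdot\ave[\nu G^2]$, with $\ave[\nu]\mik 1+O_s(\eta)$ by the hypotheses and a standard $L_1$-type comparison for the $\square_\ell(V^s)$-norm from the appendix. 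Decomposing $\ave[\nu G^2]=\ave[\psi G^2]+\ave[(\nu-\psi)G^2]$, the generalized Fact applied with $g=\nu-\psi$ and $m=2$ bounds the second term by $\|\nu-\psi\|_{\square_\ell(V^s)}\cdot O_s(1)\ll_s\eta$, using $\|F\|_{\square_\ell(V^s)}\mik\|\nu\|_{\square_\ell(V^s)}\mik 1+\eta$.

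The heart of the argument is the cut-off treatment of $\ave[\psi G^2]$. Split $G^2=\mathbf{1}_{[|G|\mik\beta]}G^2+\mathbf{1}_{[|G|>\beta]}G^2$ and bound each part via H\"older with exponents $p$ and $q$. For the low part, the pointwise inequality $\mathbf{1}_{[|G|\mik\beta]}|G|^{2q}\mik\beta^{2q-2}\mathbf{1}_{[|G|\mik\beta]}G^2$ together with the inductive hypothesis applied to the bounded function $h_{1^s}=\mathbf{1}_{[|G|\mik\beta]}G/\beta$ yields a bound of $o_{\eta\to 0;s,p,\beta}(1)$ for every fixed $\beta$. For the high part, the bound $|G|\mik\mathcal{N}$ with $\mathcal{N}(y)=\ave[\prod_{\omega\in\Omega}\nu(\pi_\omega(y,z))\,|\, z\in V^s]$, combined with the generalized Fact applied with $g=1$ and $r=\ell-1$ shifts, gives $\ave[\mathcal{N}^r]\mik\|\nu\|_{\square_\ell(V^s)}^{r|\Omega|}=O_s(1)$; since $r=\ell-1>2q$, Markov's inequality then gives $\ave[\mathbf{1}_{[\mathcal{N}>\beta]}\mathcal{N}^{2q}]\mik\ave[\mathcal{N}^r]/\beta^{r-2q}=O_s(\beta^{-(r-2q)})$, so the high part is $O_s(\beta^{-(r-2q)/q})$. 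Optimizing the choice of $\beta=\beta(\eta)$ balances the two contributions and forces their sum to be $o_{\eta\to 0;s,p}(1)$.

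The main obstacle is this last calibration. In Proposition \ref{p3.1} the marginal $\mathcal{N}$ concentrates near the constant $1$, so the large-$|G|$ contribution is controlled directly by a Markov argument on $\mathcal{N}-1$ with $k\in\{1,2\}$ moments of $\mathcal{N}$ sufficing. Under only an $L_p$-bound on $\psi$ one must instead pass through higher moments of $\mathcal{N}$, and the precise choice $\ell\meg 2q+2$ is exactly what guarantees the existence of an integer $r$ with $2q<r\mik\ell-1$, i.e., a moment that is simultaneously bounded by the generalized Fact \emph{and} large enough to beat the tail exponent demanded by the H\"older--Markov step.
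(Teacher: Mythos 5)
Your argument is correct and is precisely the modification of the proof of Proposition \ref{p3.1} that the paper intends (Proposition \ref{p3.3} is stated there without proof, as following by ``appropriately modifying'' the bounded-majorant argument): the same induction on $|\Omega|$, the same marginal $G$ and cut-off decomposition of $G^2$, with H\"{o}lder against $\psi$ replacing the trivial $L_\infty$ step and the multi-shift Gowers--Cauchy--Schwarz estimate supplying the $\ell-1$ moments of $\mathcal{N}$ needed for the Markov tail bound. Your accounting of the exponent---one column of $[\ell]^s$ is consumed by the base point $1^s$, leaving $\ell-1\meg 2q+1>2q$ independent shifts, which is exactly why the hypergraph setting needs $\ell\meg 2q+2$ rather than the $2q$ of Proposition \ref{p2.3}---is also correct.
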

\end{rem}

\subsection{Transferring Proposition \ref{p3.1} to the additive setting} \label{subsec3.3}

There is an additive version of Proposition \ref{p3.1} which is somewhat distinct from 
Proposition \ref{p2.1} and is obtained by transferring the $\ell$-box norms and the cut norm 
in the additive setting via formula \eqref{e3.2}. Specifically, let $Z$ be a finite additive group, 
let $s\meg 2$ be an integer, and let $f\colon Z\to \rr$. For every even integer $\ell\meg 2$ we define
the \textit{$(s,\ell)$-uniformity norm $\|f\|_{U^s_\ell(Z)}$ of $f$} by
\begin{equation} \label{e3.10}
\|f\|_{U^s_\ell(Z)} \coloneqq \|f(x_1+\cdots+x_s)\|_{\square_\ell(Z^s)}.
\end{equation}
Respectively, we define the \textit{$s$-additive cut norm $\|f\|_{\mathrm{cut}^s(Z)}$ of $f$} by the rule
\begin{equation} \label{e3.11}
\|f\|_{\mathrm{cut}^s(Z)} \coloneqq \|f(x_1+\cdots+x_s)\|_{\mathrm{cut}(Z^s)}.
\end{equation}
(Notice that the additive cut norm is slightly stronger than the weak uniformity norm; in particular, we have
$\|f\|_{w^s(Z)}\mik \|f\|_{\mathrm{cut}^s(Z)}$.) Taking into account \eqref{e3.10} and \eqref{e3.11},
we see that Proposition \ref{p3.1} can be reformulated as follows.
\begin{cor} \label{c3.4}
Let $Z$ be a finite additive group, let $s\meg 2$ be an integer, and let\, $0<\eta\mik 1$. 
Also let $\nu\colon Z\to \rr^+$ such that
\begin{equation} \label{e3.12}
\|\nu-1\|_{U^s_4(Z)}\mik \eta.
\end{equation}
Finally, let $f\colon Z\to\rr$ with $|f|\mik \nu$. If\, $\|f\|_{\mathrm{cut}^s(Z)}\mik \eta$, then
\begin{equation} \label{e3.13}
\|f\|_{U^s(Z)}= o_{\eta\to 0;s}(1).
\end{equation}
\end{cor}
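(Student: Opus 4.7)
The plan is to deduce Corollary \ref{c3.4} from Proposition \ref{p3.1} by the obvious transfer through the formula \eqref{e3.2}, taking $V=Z$. The idea is simply that both hypotheses and conclusion of Corollary \ref{c3.4} are, by the very definitions \eqref{e3.10} and \eqref{e3.11}, the $\square_4(Z^s)$-, $\mathrm{cut}(Z^s)$- and $\square(Z^s)$-norms of the pullbacks of $f$ and $\nu$ along the group addition map $(x_1,\dots,x_s)\mapsto x_1+\cdots+x_s$.

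Concretely, first I would define $F\colon Z^s\to\rr$ and $N\colon Z^s\to\rr^+$ by $F(x_1,\dots,x_s)\coloneqq f(x_1+\cdots+x_s)$ and $N(x_1,\dots,x_s)\coloneqq \nu(x_1+\cdots+x_s)$. Since $|f|\mik \nu$ we have $|F|\mik N$ pointwise on $Z^s$. Next I would unpack the three norm quantities using \eqref{e3.10}, \eqref{e3.11} and \eqref{e3.2}: the identity $N(x_1,\dots,x_s)-1=(\nu-1)(x_1+\cdots+x_s)$ together with \eqref{e3.10} gives
\[ \|N-1\|_{\square_4(Z^s)}=\|\nu-1\|_{U^s_4(Z)}\mik \eta, \]
while \eqref{e3.11} gives $\|F\|_{\mathrm{cut}(Z^s)}=\|f\|_{\mathrm{cut}^s(Z)}\mik \eta$, and finally \eqref{e3.2} gives $\|F\|_{\square(Z^s)}=\|f\|_{U^s(Z)}$.

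With these identifications in hand, I would simply apply Proposition \ref{p3.1} to $V=Z$, to the majorant $N$, and to the function $F$; all three hypotheses \eqref{e3.6}, $|F|\mik N$, and $\|F\|_{\mathrm{cut}(V^s)}\mik \eta$ are verified above, so the conclusion \eqref{e3.7} yields $\|F\|_{\square(Z^s)}=o_{\eta\to 0;s}(1)$, which is exactly \eqref{e3.13}. There is no real obstacle here: the only thing to be careful about is checking that pulling back by the sum map commutes with the subtraction of the constant $1$ (so that $(\nu-1)\circ \mathrm{sum}=N-1$), which is trivial. The entire proof is therefore a one-line invocation of Proposition \ref{p3.1} after the dictionary provided by \eqref{e3.10}--\eqref{e3.11} is made explicit.
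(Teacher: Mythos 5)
Your proof is correct and is exactly what the paper intends: Corollary \ref{c3.4} is stated there as a direct reformulation of Proposition \ref{p3.1} via the dictionary \eqref{e3.2}, \eqref{e3.10}, \eqref{e3.11}, which is precisely the pullback argument you spell out. No issues.
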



\section{A variant of the Koopman--von Neumann decomposition} \label{sec4}

\numberwithin{equation}{section}

\subsection{Overview} \label{subsec4.1}

The \emph{Koopman--von Neumann decomposition} is a circle of results asserting that, 
under certain circumstances, one can decompose a function $f$ as $f=f_{\mathrm{bnd}}+f_{\mathrm{err}}$ 
where $f_{\mathrm{bnd}}$ is bounded in magnitude by $1$ and $f_{\mathrm{err}}$ has small uniformity 
norm\footnote{As we have already noted in the introduction, in applications it is not enough to control
the error-term $f_{\mathrm{err}}$ using a weaker norm.}. To see the relevance in this context 
of the equivalence between the uniformity norms and their weaker versions, note that one can first 
approximate $f$ by a bounded function $f_{\mathrm{bnd}}$ such that the difference $f-f_{\mathrm{bnd}}$ 
is small in a weaker norm, and then upgrade this information using the results in the previous sections.
This strategy (also used in \cite{TZ1,TZ2}) is quite effective partly because the aforementioned 
weaker approximation can be achieved relatively easily using various methods. 
We will use one of these methods, the so-called \textit{dense model theorem}.

\subsection{Consequences of the dense model theorem} \label{subsec4.2}

We begin by recalling the dense model theorem; we will state the formulation which is closest 
to the purposes of this note (see \cite[Theorem 1.1]{RTTV} or \cite[Theorem 3.5]{TZ2}).
\begin{prop} \label{p4.1}
Let $X$ be a finite set, and let $\mathcal{F}$ be a family of $[-1,1]$-valued functions on $X$. 
Also let $0<\eta\mik 1$, and let $\nu\colon X\to \rr^+$ such that $\ave[\nu]\mik 1+\eta$ and satisfying
\begin{equation} \label{e4.1}
\Big|\ave \Big[(\nu-1)\prod_{i=1}^k F_i \Big]\Big|\mik \eta
\end{equation}
for every $F_1,\dots,F_k\in\mathcal{F}$. Then for every $g\colon X\to \rr$ with $0\mik g\mik \nu$
there exists $w\colon X\to [0,1]$ such that
\begin{equation} \label{e4.2}
\sup\big\{ |\ave\big[(g-w)F]|: F\in\mathcal{F}\big\}=o_{\eta\to 0}(1).
\end{equation}
\end{prop}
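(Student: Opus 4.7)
The plan is to prove Proposition \ref{p4.1} by a minimax / Hahn--Banach duality argument, in the spirit of Gowers's reformulation of the dense model theorem. Suppose, for contradiction, that the conclusion fails: there is a fixed $\ee>0$, independent of $\eta$, such that for every $w\colon X\to [0,1]$ we have $\sup\{|\ave[(g-w)F]|:F\in\mathcal{F}\}>\ee$. Since hypothesis \eqref{e4.1} is preserved under negating any subset of the $F_i$'s, we may replace $\mathcal{F}$ by $\mathcal{F}\cup(-\mathcal{F})$ and assume $\mathcal{F}$ is symmetric under negation; then we may drop the absolute value and find, for every $w\in K\coloneqq [0,1]^X$, some $F\in\mathcal{F}$ with $\ave[(g-w)F]>\ee$.

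Consider the bilinear form $B(w,F)\coloneqq \ave[(g-w)F]$ on $K\times \mathrm{conv}(\mathcal{F})\subseteq \rr^X\times \rr^X$. Since $B$ is linear in $F$, passing from $\mathcal{F}$ to $\mathrm{conv}(\mathcal{F})$ preserves the supremum, and both factors are compact convex in the finite-dimensional space $\rr^X$. Von Neumann's minimax theorem then yields some $F_0\in \mathrm{conv}(\mathcal{F})$ with $\ave[(g-w)F_0]>\ee$ for \emph{every} $w\in K$. The minimizing choice $w=\mathbf{1}_{[F_0>0]}$ gives $\ave[gF_0]-\ave[(F_0)_+]>\ee$; since $0\mik g\mik \nu$ and $F_0=(F_0)_+-(F_0)_-$, one has $\ave[gF_0]\mik \ave[\nu\cdot (F_0)_+]$, so
\[ \ave[(\nu-1)(F_0)_+] > \ee. \]

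Now fix a polynomial $p(t)=\sum_{k=0}^K c_k t^k$ with $\sup_{t\in[-1,1]}|p(t)-t_+|\mik \ee/4$ (Weierstrass approximation); both $K=K(\ee)$ and $M\coloneqq\sum_{k=1}^K|c_k|$ are finite quantities depending only on $\ee$, and in particular $|c_0|=|p(0)|\mik \ee/4$. Using $|F_0|\mik 1$ and $\ave[|\nu-1|]\mik 2+\eta$, the approximation costs $O(\ee)$ in expectation, so $\ave[(\nu-1)p(F_0)]>\ee/2$ once $\eta$ is small. Writing $F_0=\sum_i \lambda_i F_i$ as a convex combination of elements of $\mathcal{F}$ and expanding $F_0^k=\sum_{i_1,\dots,i_k} \lambda_{i_1}\cdots\lambda_{i_k}F_{i_1}\cdots F_{i_k}$, hypothesis \eqref{e4.1} bounds $|\ave[(\nu-1)F_{i_1}\cdots F_{i_k}]|$ by $\eta$ for each $k\meg 1$; summing with the nonnegative weights, which sum to $1$, yields $|\ave[(\nu-1)F_0^k]|\mik \eta$. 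Combined with the trivial bound $|\ave[\nu-1]|\mik 1$ for the constant term, this gives
\[ |\ave[(\nu-1)p(F_0)]| \mik \tfrac{\ee}{4}+M\eta, \]
contradicting the lower bound $\ee/2$ as soon as $\eta\mik \ee/(4M)$.

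The main obstacle is the polynomial-approximation step: the coefficient sum $M$ grows rapidly with $1/\ee$ (polynomially or worse, depending on the chosen approximation scheme), so $\eta$ must be taken much smaller than $\ee$. This is precisely why hypothesis~\eqref{e4.1} is required to hold for \emph{all} orders $k$ (and not just some fixed $k$): given any target $\ee$, one first fixes the degree $K=K(\ee)$ and only then selects $\eta$. The remaining ingredients---verifying the applicability of minimax and the passage from $F_0\in\mathrm{conv}(\mathcal{F})$ to products of elements of $\mathcal{F}$---are routine consequences of $X$ being finite and the multilinearity of expectation.
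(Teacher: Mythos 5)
The paper does not prove Proposition \ref{p4.1}; it quotes it as the dense model theorem and cites \cite[Theorem 1.1]{RTTV} and \cite[Theorem 3.5]{TZ2}. Your argument is essentially the standard duality proof from those sources (and from \cite{Go3}): symmetrize $\mathcal{F}$, run a minimax/separation argument to produce a single $F_0$ in the convex hull that witnesses failure against every $w\in[0,1]^X$, identify the optimal $w=\mathbf{1}_{[F_0>0]}$ to reduce to $\ave[(\nu-1)(F_0)_+]>\ee$, and then kill this by approximating $t\mapsto t_+$ with a polynomial whose monomials are controlled by hypothesis \eqref{e4.1}. The structure is sound, the quantifier bookkeeping is right (the degree $K(\ee)$ and coefficient sum $M(\ee)$ depend only on $\ee$, so $\eta_0(\ee)$ is universal, which is exactly what the unsubscripted $o_{\eta\to 0}(1)$ requires), and the observation that \eqref{e4.1} must hold for all orders $k$ is the correct explanation of why the hypothesis is stated that way.

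Two small repairs are needed, neither conceptual. First, your constants do not close: with sup-norm error $\ee/4$ in the Weierstrass step and $\ave[|\nu-1|]\mik 2+\eta\mik 3$, the approximation loss is $3\ee/4$, so you only retain $\ave[(\nu-1)p(F_0)]>\ee/4$, not $\ee/2$; take the polynomial error to be $\ee/12$ (say) and the final comparison $\ee/12+M\eta<\ee/4$ goes through for $\eta<\ee/(6M)$. Second, be slightly more careful with the minimax step: $\mathrm{conv}(\mathcal{F})$ need not be closed, so either apply Sion's theorem (which needs compactness of only one factor, here $K=[0,1]^X$) or pass to the closed convex hull and then approximate the resulting $F_0$ by a genuine finite convex combination, absorbing the error into the polynomial step via the Lipschitz constant of $p$ on $[-1,1]$. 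Both are routine in the finite-dimensional setting and do not affect the argument.
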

We will need two consequences of Proposition \ref{p4.1}. The first one concerns functions defined 
on a finite additive group $Z$. Recall that by $\|\cdot\|_{\mathrm{cut}^s(Z)}$ we denote the additive 
cut norm defined in \eqref{e3.11}.
\begin{cor} \label{c4.2}
Let $Z$ be a finite additive group, and let $s\meg 2$ be an integer. 
Let $0<\eta\mik 1$ and $\nu\colon Z\to \rr^+$ such that
$\|\nu-1\|_{\mathrm{cut}^s(Z)}\mik \eta$. Then for every $g\colon Z\to\rr$ with $0\mik g\mik \nu$ 
there exists $w\colon Z\to [0,1]$ such that $\|g-w\|_{\mathrm{cut}^s(Z)}= o_{\eta\to 0}(1)$. 
Consequently, for every $f\colon Z\to\rr$ with $|f|\mik \nu$ there exists $h\colon Z\to [-1,1]$ such that
$\|f-h\|_{\mathrm{cut}^s(Z)}= o_{\eta\to 0}(1)$.
\end{cor}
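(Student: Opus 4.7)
The plan is to deduce both assertions directly from the dense model theorem (Proposition~\ref{p4.1}). For the first assertion, I would apply it with $X=Z$ and with $\mathcal{F}$ taken to be the family of $[-1,1]$-valued functions on $Z$ of the form
\[ \Phi_{\mathbf{H}}(u) \coloneqq \ave\Big[\prod_{\omega\in[2]^s\setminus\{1^s\}} H_\omega\big(\pi_\omega(y,z)\big)\,\Big|\,(y,z)\in Z^{s\times 2},\ y_1+\cdots+y_s=u\Big], \]
where $\mathbf{H}=\langle H_\omega:\omega\neq 1^s\rangle$ ranges over all tuples of $[-1,1]$-valued functions on $Z^s$. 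Directly from the definitions in \eqref{e3.4} and \eqref{e3.11}, we have $\|f\|_{\mathrm{cut}^s(Z)}=\sup_{F\in\mathcal{F}}|\ave[fF]|$ for every $f\colon Z\to\rr$, so the conclusion of Proposition~\ref{p4.1} applied to $g$ produces exactly a $w\colon Z\to[0,1]$ with $\|g-w\|_{\mathrm{cut}^s(Z)}=o_{\eta\to 0}(1)$.

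The bulk of the work is then to verify the hypotheses of Proposition~\ref{p4.1}. The estimate $\ave[\nu]\mik 1+\eta$ follows by specializing the bound $\|\nu-1\|_{\mathrm{cut}^s(Z)}\mik\eta$ to the choice $H_\omega\equiv 1$ for every $\omega\neq 1^s$, which collapses the cut-norm integrand to $\ave[\nu]-1$. For the product condition $|\ave[(\nu-1)\prod_{i=1}^k F_i]|\mik\eta$, the case $k=1$ is immediate: unfolding $\Phi_{\mathbf{H}}$ and swapping the order of expectation rewrites the quantity as the cut-norm integrand of $\nu-1$ with test functions $H_\omega$, which is bounded by $\|\nu-1\|_{\mathrm{cut}^s(Z)}\mik\eta$.

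The technically delicate step, and what I expect to be the main obstacle, is verifying the product bound for $k\meg 2$, because $\prod_{i=1}^k\Phi_{\mathbf{H}^{(i)}}$ does not itself lie in $\mathcal{F}$. My plan is to expand each factor so as to rewrite $\ave\big[(\nu-1)\prod_i\Phi_{\mathbf{H}^{(i)}}\big]$ as a single high-dimensional average over $(y^{(i)},z^{(i)})_{i=1}^k\in Z^{s\times 2\times k}$ subject to the linear constraints $\sum_j y^{(1)}_j=\cdots=\sum_j y^{(k)}_j$, and then to apply Gowers--Cauchy--Schwarz iteratively along the $k-1$ auxiliary cube directions to absorb the extra variables into a genuine cut-norm integrand for $\nu-1$. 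This should yield an estimate of the form $C_{s,k}\,\eta^{c_{s,k}}$, which is what the quantitative form of Proposition~\ref{p4.1} asks for.

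For the ``consequently'' clause, I would argue by a standard $\pm$-decomposition. Setting $f_\pm\coloneqq\max(\pm f,0)$ gives $0\mik f_\pm\mik|f|\mik\nu$ and $f=f_+-f_-$. Applying the first part to each of $f_+,f_-$ produces $w_\pm\colon Z\to[0,1]$ with $\|f_\pm-w_\pm\|_{\mathrm{cut}^s(Z)}=o_{\eta\to 0}(1)$, and then $h\coloneqq w_+-w_-$ takes values in $[-1,1]$ and satisfies $\|f-h\|_{\mathrm{cut}^s(Z)}\mik\|f_+-w_+\|_{\mathrm{cut}^s(Z)}+\|f_--w_-\|_{\mathrm{cut}^s(Z)}=o_{\eta\to 0}(1)$ by the triangle inequality.
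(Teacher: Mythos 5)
Your overall strategy---apply the dense model theorem with the family of cut-norm ``dual'' functions, check the hypotheses, and finish the signed case by the $f=f_+-f_-$ decomposition---is the same as the paper's, and your treatment of $\ave[\nu]\mik 1+\eta$, of the case $k=1$, and of the ``consequently'' clause is correct. The problem is the step you yourself single out as delicate, namely the product condition for $k\meg 2$, and your proposed fix does not work. An iterated Gowers--Cauchy--Schwarz applied to $\ave\big[(\nu-1)\prod_{i}\Phi_{\mathbf{H}^{(i)}}\big]$ necessarily duplicates every factor depending on the base point $u$, including $\nu(u)-1$; after even one application you are facing correlations of $\nu-1$ with shifts of itself, i.e.\ a box-norm (or $U$-norm) expression in $\nu-1$. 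The hypothesis of Corollary \ref{c4.2} only controls $\|\nu-1\|_{\mathrm{cut}^s(Z)}$, which says nothing about such self-correlations (this is exactly the gap between the weak norm and the uniformity norm that the whole paper is about), so the Cauchy--Schwarz route cannot ``absorb the extra variables into a genuine cut-norm integrand for $\nu-1$''. Arguments of the type you describe are available in \cite{CFZ,TZ1} precisely because there the majorant satisfies a linear forms condition, which is much stronger than a cut-norm bound.

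The correct resolution, which is what the paper does, is to enlarge $\mathcal{F}$ to the \emph{convex hull} of your functions $\Phi_{\mathbf{H}}$ and to observe that this family is closed under multiplication (this is Lemma 3.3 of \cite{Zh}). Concretely, in the product $\prod_{i=1}^k\Phi_{\mathbf{H}^{(i)}}(u)$ write $y^{(i)}=y^{(1)}+t^{(i)}$ with $\sum_j t^{(i)}_j=0$ for $i\meg 2$; for fixed $(t^{(i)},z^{(i)})_{i\meg 2}$ each factor $H^{(i)}_\omega\big(\pi_\omega(y^{(1)}+t^{(i)},z^{(i)})\big)$ depends only on the coordinates $\{y^{(1)}_j:\omega_j=1\}$, hence only on $\pi_\omega(y^{(1)},z^{(1)})$, so the whole product collapses to $\prod_{\omega\neq 1^s}\tilde{H}_\omega\big(\pi_\omega(y^{(1)},z^{(1)})\big)$ with $[-1,1]$-valued $\tilde{H}_\omega$. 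Averaging over $(t^{(i)},z^{(i)})_{i\meg 2}$ exhibits $\prod_i\Phi_{\mathbf{H}^{(i)}}$ as an average of single dual functions, so the $k\meg 2$ case reduces to $k=1$ with the bound exactly $\eta$ and no constants $C_{s,k}$. (This also removes the secondary issue that your claimed bound $C_{s,k}\eta^{c_{s,k}}$ does not literally match the hypothesis $\mik\eta$ of Proposition \ref{p4.1} as stated.)
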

The second consequence is the analogue of Corollary \ref{c4.2} for hypergraphs.
\begin{cor} \label{c4.3}
Let $V$ be a nonempty finite set, and let $s\meg 2$ be an integer. Also let $0<\eta\mik 1$ and 
$\nu\colon V^s\to \rr^+$ such that $\|\nu-1\|_{\mathrm{cut}(V^s)}\mik \eta$. Then for every 
$G\colon V^s\to\rr$ with $0\mik G\mik \nu$ there exists $W\colon V^s\to [0,1]$ such that
$\|G-W\|_{\mathrm{cut}(V^s)}= o_{\eta\to 0}(1)$. Consequently, for every $F\colon V^s\to\rr$ 
with $|F|\mik \nu$ there exists $H\colon V^s\to [-1,1]$ such that $\|F-H\|_{\mathrm{cut}(V^s)}= o_{\eta\to 0}(1)$.
\end{cor}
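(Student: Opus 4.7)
The plan is to apply Proposition \ref{p4.1} with $X = V^s$ and with $\mathcal{F}$ chosen so that its dual norm agrees with $\|\cdot\|_{\mathrm{cut}(V^s)}$. Concretely, I would let $\mathcal{F}$ consist of all functions $A\colon V^s \to [-1,1]$ of the form
\[ A(y) = \ave\Big[ \prod_{\omega \in [2]^s \setminus \{1^s\}} H_\omega\big(\pi_\omega(y, z)\big) \,\Big|\, z\in V^s\Big], \]
with each $H_\omega\colon V^s \to [-1,1]$. A direct Fubini computation identifies $\|F\|_{\mathrm{cut}(V^s)} = \sup_{A \in \mathcal{F}} |\ave[FA]|$ for every $F\colon V^s \to \rr$, so the conclusion of Proposition \ref{p4.1} with respect to this $\mathcal{F}$ is precisely the desired cut-norm approximation.

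To verify the hypotheses of Proposition \ref{p4.1}, taking $H_\omega \equiv 1$ shows $1 \in \mathcal{F}$, whence $\ave[\nu] \mik 1 + \|\nu - 1\|_{\mathrm{cut}(V^s)} \mik 1 + \eta$. The heart of the matter is the correlation estimate
\[ \Big|\ave\Big[(\nu-1)\prod_{i=1}^k A_i\Big]\Big| = \Big|\ave\Big[ (\nu(y)-1) \prod_{i=1}^k \prod_{\omega \neq 1^s} H_\omega^{(i)}\big(\pi_\omega(y, z_i)\big) \,\Big|\, y, z_1, \dots, z_k \in V^s\Big]\Big| \mik \eta \]
for products of $k \meg 1$ elements of $\mathcal{F}$. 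I would establish this by freezing $z_2, \dots, z_k$ and absorbing the extra copies: for each $\omega \in [2]^s\setminus \{1^s\}$, the coordinates of $\pi_\omega(y, z_i)$ that actually vary with $y$ are exactly the $y_j$ with $\omega_j = 1$, so once $z_i$ is fixed, the factor $\prod_{i\meg 2} H_\omega^{(i)}(\pi_\omega(y, z_i))$ depends on $y$ only through the same coordinates as $\pi_\omega(y, z_1)$ and can therefore be folded into $H_\omega^{(1)}$, producing a new $[-1,1]$-valued function $\widetilde H_\omega$ on $V^s$. The remaining inner expectation in $(y, z_1)$ is then a genuine cut-norm test of $\nu-1$ against $\langle \widetilde H_\omega\rangle$ and hence bounded by $\eta$ uniformly in $z_2, \dots, z_k$, which proves the displayed estimate.

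Proposition \ref{p4.1} applied to $g = G$ (with $0 \mik G \mik \nu$) now produces $W\colon V^s \to [0,1]$ with $\|G - W\|_{\mathrm{cut}(V^s)} = \sup_{A\in\mathcal{F}} |\ave[(G-W)A]| = o_{\eta\to 0}(1)$, which is the first statement. For the signed version, I would decompose $F = F^+ - F^-$ with $F^\pm \coloneqq \max\{\pm F, 0\}$, so that $0 \mik F^\pm \mik |F| \mik \nu$, apply the first part to each of $F^+$ and $F^-$ to obtain $W^\pm \colon V^s \to [0,1]$ with $\|F^\pm - W^\pm\|_{\mathrm{cut}(V^s)} = o_{\eta \to 0}(1)$, and set $H \coloneqq W^+ - W^-\colon V^s \to [-1,1]$; the triangle inequality for the cut norm then yields $\|F - H\|_{\mathrm{cut}(V^s)} = o_{\eta\to 0}(1)$.

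The main obstacle is the verification of the product correlation estimate, since $\|\nu - 1\|_{\mathrm{cut}(V^s)} \mik \eta$ is a priori only a bound against single test functions. The absorption argument is what makes it propagate to arbitrary finite products with \emph{no} quantitative loss, exploiting the fact that a bounded multiplier depending on the coordinates of any single ``face'' $\omega \neq 1^s$ can always be folded into the corresponding $H_\omega$.
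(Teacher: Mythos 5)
Your proposal is correct and is essentially the argument the paper has in mind: you apply Proposition \ref{p4.1} to the family of ``dual'' functions whose associated norm is $\|\cdot\|_{\mathrm{cut}(V^s)}$, and your absorption argument (folding the factors coming from $z_2,\dots,z_k$ into $\widetilde H_\omega$) is exactly the closure-under-multiplication observation the paper invokes for the analogous Corollary \ref{c4.2}. The positive/negative part decomposition for the signed version also matches the intended deduction.
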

Corollary \ref{c4.3} is a straightforward consequence of Proposition \ref{p4.1}. On the other hand, 
Corollary \ref{c4.2} follows by applying Proposition \ref{p4.1} for the family $\mathcal{F}$ of all 
convex combinations\footnote{The need to convexify the set of ``dual" functions is very natural from 
a functional analytic perspective; see, \textit{e.g.}, \cite{Go3}.} of functions $D\colon Z\to  \rr$ of the form
\[ D(z)=\ave\Big[ \prod_{\omega\in [2]^s\setminus \{1^s\}}\!\!\! H_\omega\big( \pi_\omega(x)\big) \,\Big|\,
x=(x_{ij})\in Z^{s\times 2} \text{ with } \sum_{i=1}^s x_{i1}=z \Big] \]
where $H_\omega\colon Z^s\to [-1,1]$ for every $\omega\in  [2]^s\setminus \{1^s\}$. Indeed, it is not hard to see that
this family $\mathcal{F}$ is closed under multiplication (see, \textit{e.g.}, the proof of Lemma 3.3 in~\cite{Zh}).

\subsection{The main results} \label{subsec4.3}

We are ready to state our first result in this section. It is a variant of \cite[Proposition~8.1]{GT1} 
(see also \cite[Proposition 10.3]{GT2}).
\begin{cor} \label{c4.4}
Let $Z$ be a finite additive group, let $s\meg 2$ be an integer, and let\, $0<\eta\mik 1$. 
Also let $\nu\colon Z\to\rr^+$ such that
\begin{equation} \label{e4.3}
\|\nu-1\|_{U^{2s}(Z)}\mik \eta.
\end{equation}
Then for every $f\colon Z\to\rr$ with $|f|\mik \nu$ there exists $h\colon Z\to [-1,1]$ such that
\begin{equation} \label{e4.4}
\|f-h\|_{U^s(Z)}=o_{\eta\to 0;s}(1).
\end{equation}
Moreover, if $f$ is nonnegative, then $h$ is also nonnegative.
\end{cor}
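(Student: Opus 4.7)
The plan is to combine Corollary \ref{c4.2} (the weak approximation from the dense model theorem) with Proposition \ref{p2.1} (the equivalence of the uniformity norm and its weak version), using the former to build a candidate $h$ and the latter to upgrade weak control on $f-h$ to control in the Gowers $U^s$-norm.

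First I would check that the hypothesis \eqref{e4.3} feeds into Corollary \ref{c4.2}. By the monotonicity of the Gowers uniformity norms we have $\|\nu-1\|_{U^s(Z)}\mik \|\nu-1\|_{U^{2s}(Z)}\mik \eta$, and combining \eqref{e3.11} with the inequality $\|\cdot\|_{\mathrm{cut}(Z^s)}\mik \|\cdot\|_{\square(Z^s)}$ from \eqref{e3.5} together with \eqref{e3.2} gives $\|\nu-1\|_{\mathrm{cut}^s(Z)}\mik \|\nu-1\|_{U^s(Z)}\mik \eta$. Thus Corollary \ref{c4.2} applies and yields $h\colon Z\to [-1,1]$ with
\[ \|f-h\|_{\mathrm{cut}^s(Z)}=o_{\eta\to 0}(1). \]
Moreover, if $f\meg 0$, then applying the \emph{first} conclusion of Corollary \ref{c4.2} to $g=f$ directly produces $h$ taking values in $[0,1]$, which handles the nonnegativity clause.

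Next I would set up Proposition \ref{p2.1} for the difference $f-h$. Since $|h|\mik 1\mik \tfrac{1}{2}(\nu+1)$ (note $\nu\meg 0$, and we may freely assume $\nu\meg$ some small thing; in fact $|h|\mik 1$ regardless) and $|f|\mik \nu$, define
\[ \widetilde{\nu}\coloneqq \tfrac{1}{2}(\nu+1), \qquad \widetilde{f}\coloneqq \tfrac{1}{2}(f-h). \]
Then $|\widetilde{f}|\mik \widetilde{\nu}$, and since $U^{2s}(Z)$ is a seminorm with $\|1-1\|_{U^{2s}(Z)}=0$,
\[ \|\widetilde{\nu}-1\|_{U^{2s}(Z)}=\tfrac{1}{2}\|\nu-1\|_{U^{2s}(Z)}\mik \tfrac{\eta}{2}. \]
Moreover, using $\|\cdot\|_{w^s(Z)}\mik \|\cdot\|_{\mathrm{cut}^s(Z)}$ noted after \eqref{e3.11},
\[ \|\widetilde{f}\|_{w^s(Z)}\mik \tfrac{1}{2}\|f-h\|_{\mathrm{cut}^s(Z)}=o_{\eta\to 0}(1). \]
Setting $\eta'\coloneqq \max\{\eta/2,\, \|\widetilde{f}\|_{w^s(Z)}\}$, both hypotheses of Proposition \ref{p2.1} hold with parameter $\eta'$, and $\eta'\to 0$ as $\eta\to 0$. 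Invoking Proposition \ref{p2.1} gives $\|\widetilde{f}\|_{U^s(Z)}=o_{\eta'\to 0;s}(1)=o_{\eta\to 0;s}(1)$, and multiplying by $2$ yields \eqref{e4.4}.

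The only real content beyond citing the two ingredients is the passage from $\|\nu-1\|_{U^{2s}}\mik \eta$ to the cut-norm hypothesis of Corollary \ref{c4.2}, and the verification that $\widetilde{\nu}$ still satisfies the $U^{2s}$-pseudorandomness assumption of Proposition \ref{p2.1}; both are one-line seminorm manipulations. There is no serious obstacle; the argument is a clean two-step application of the already-established tools, with the management of the $o_{\eta\to 0}(\cdot)$ parameters being the only mild bookkeeping issue.
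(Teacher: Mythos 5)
Your proposal is correct and follows essentially the same route as the paper's proof: pass from the $U^{2s}$ hypothesis to $\|\nu-1\|_{\mathrm{cut}^s(Z)}\mik\eta$, apply Corollary \ref{c4.2} to get $h$, then apply Proposition \ref{p2.1} to $(f-h)/2$ with majorant $(\nu+1)/2$. Your explicit handling of the nonnegativity clause and of the $o_{\eta\to 0}(\cdot)$ bookkeeping is a slightly more careful writeup of the same argument.
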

As we have already mentioned in the introduction, Corollary \ref{c4.4} answers a question of Gowers.
We also note that if $Z$ is a finite additive group and $f\colon Z\to \rr^+$ is a function which is approximated
by a $[0,1]$-valued function on $Z$ in the sense of \eqref{e4.4}---that is, there exists $h\colon Z\to [0,1]$ 
such that $\|f-h\|_{U^s(Z)}=o(1)$---then $f$ is majorized by a function $\nu\colon Z\to \rr^+$ satisfying 
$\|\nu-1\|_{U^s(Z)}=o(1)$; indeed, simply take $\nu\coloneqq f+(1-h)$. Thus we see that the pseudorandomness 
hypothesis \eqref{e4.3} is nearly optimal.
\begin{proof}[Proof of Corollary \emph{\ref{c4.4}}]
First observe that, by \eqref{e4.3}, the monotonicity of the Gowers norms 
$\|\cdot\|_{U^s(Z)}\mik \|\cdot\|_{U^{2s}(Z)}$, the identity \eqref{e3.2} and \eqref{e3.5}, 
we have that $\|\nu-1\|_{\mathrm{cut}^s(Z)}\mik \eta$. Hence, by Corollary \ref{c4.2}, 
there exists $h\colon Z\to [-1,1]$ such that $\|f-h\|_{\mathrm{cut}^s(Z)}= o_{\eta\to 0}(1)$. 
Set $\nu'\coloneqq (\nu+1)/2$ and notice that $|f-h|/2\mik \nu'$ and $\|\nu'-1\|_{U^{2s}(Z)}\mik \eta$. 
By Proposition~\ref{p2.1}, the result follows.
\end{proof}
Our second result is a variant of \cite[Theorem 3.9]{Tao}.
\begin{cor} \label{c4.5}
Let $V$ be a nonempty finite set, let $s\meg 2$ be an integer, and let\, $0<\eta\mik 1$. 
Also let $\nu\colon V^s\to\rr^+$ such that
\begin{equation} \label{e4.5}
\|\nu-1\|_{\square_4(V^s)}\mik \eta.
\end{equation}
Then for every $F\colon V^s\to\rr$ with $|F|\mik \nu$ there exists $H\colon Z\to [-1,1]$ such that
\begin{equation} \label{e4.6}
\|F-H\|_{\square(V^s)}=o_{\eta\to 0;s}(1).
\end{equation}
Moreover, if $F$ is nonnegative, then $H$ is also nonnegative.
\end{cor}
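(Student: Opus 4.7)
The plan is to mirror the argument given for the additive analogue Corollary~\ref{c4.4}, replacing each ingredient by its hypergraph counterpart: Corollary~\ref{c4.3} in place of Corollary~\ref{c4.2}, and Proposition~\ref{p3.1} in place of Proposition~\ref{p2.1}.

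First I would weaken the pseudorandomness assumption~\eqref{e4.5} to a cut-norm bound. By the monotonicity of the $\ell$-box norms (noted in the discussion following~\eqref{e3.3}), $\|\nu-1\|_{\square(V^s)}\mik \|\nu-1\|_{\square_4(V^s)}\mik \eta$, and then~\eqref{e3.5} yields $\|\nu-1\|_{\mathrm{cut}(V^s)}\mik \eta$. Corollary~\ref{c4.3} now applies and produces $H\colon V^s\to [-1,1]$ with $\|F-H\|_{\mathrm{cut}(V^s)}=o_{\eta\to 0}(1)$. Moreover, if $F\meg 0$, then invoking instead the first part of Corollary~\ref{c4.3} with $G\coloneqq F$ gives a $[0,1]$-valued $H$, which will deliver the nonnegativity clause.

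Next I would upgrade this cut-norm approximation to a $\square(V^s)$-norm approximation by applying Proposition~\ref{p3.1} to the function $(F-H)/2$ with majorant $\nu'\coloneqq (\nu+1)/2$. Since $|F|\mik \nu$ and $|H|\mik 1$ we have $|F-H|/2\mik \nu'$, and $\|\nu'-1\|_{\square_4(V^s)}=\tfrac{1}{2}\|\nu-1\|_{\square_4(V^s)}\mik \eta/2$, so the pseudorandomness hypothesis~\eqref{e3.6} is satisfied for $\nu'$. Setting $\tilde{\eta}\coloneqq \max\{\eta/2,\,\|(F-H)/2\|_{\mathrm{cut}(V^s)}\}=o_{\eta\to 0}(1)$ to bundle the two small quantities into a common parameter, Proposition~\ref{p3.1} gives $\|(F-H)/2\|_{\square(V^s)}=o_{\tilde{\eta}\to 0;s}(1)=o_{\eta\to 0;s}(1)$, which is~\eqref{e4.6} up to the harmless factor of $2$.

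I do not anticipate any serious obstacle: the argument is essentially a routine hypergraph transfer of the additive proof. The only point demanding a bit of care is the bookkeeping that feeds the pseudorandomness bound on $\nu'$ and the cut-norm bound on $(F-H)/2$---which are a priori controlled by two distinct functions of $\eta$---into a single small parameter so that Proposition~\ref{p3.1} can be applied as stated; this is handled by the $\tilde{\eta}$ above, exactly as in the proof of Corollary~\ref{c4.4}.
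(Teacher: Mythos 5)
Your proposal is correct and follows exactly the route the paper takes: Corollary \ref{c4.3} to get a cut-norm approximant $H$, then Proposition \ref{p3.1} applied to $(F-H)/2$ with majorant $(\nu+1)/2$ to upgrade to the box norm (the paper simply says the proof is identical to that of Corollary \ref{c4.4}, which uses precisely this rescaling). Your extra bookkeeping with $\tilde{\eta}$ and the monotonicity step $\|\nu-1\|_{\mathrm{cut}(V^s)}\mik\|\nu-1\|_{\square_4(V^s)}$ only makes explicit what the paper leaves implicit.
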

\begin{proof}
It is identical to the proof of Corollary \ref{c4.4}. Indeed, by \eqref{e4.5} and Corollary \ref{c4.3}, 
there exists $H\colon V^s\to [-1,1]$ such that $\|F-H\|_{\mathrm{cut}(V^s)}= o_{\eta\to 0}(1)$.
By Proposition \ref{p3.1}, the result follows.
\end{proof}


\section{On the relative inverse theorem for the Gowers $U^s[N]$-norm} \label{sec5}

\numberwithin{equation}{section}

\subsection{Overview} \label{subsec5.1}

In order to put the main result of this section in a proper context, we begin with a brief discussion 
on the \textit{nilpotent Hardy--Littlewood method} invented by Green and Tao \cite{GT2}. It is a powerful 
method for obtaining precise asymptotic estimates (as $N\to+\infty$) for expressions of the form
\begin{equation} \label{e5.1}
\sum_{n\in K\cap \zz^{d}} \, \prod_{i=1}^t f_i\big(\psi_i(n)\big)
\end{equation}
where $f_1,\dots,f_t\colon \zz\to\rr$ are arithmetic functions supported on the set of positive integers,
$K\subseteq [-N,N]^d$ is a convex body and $\psi_1,\dots,\psi_t\colon \zz^d\to~\zz$ are affine linear 
forms no two of which are affinely dependent. The first step of the method relies on the 
\textit{generalized von Neumann theorem}---see \cite[Proposition 7.1]{GT2}---which reduces 
the estimation of the quantity in \eqref{e5.1} to a norm estimate
\begin{equation} \label{e5.2}
\|f_i-1\|_{U^s[N]}=o_s(1) \ \ \text{for every } s\meg 2 \text{ and every } i\in\{1,\dots,t\}
\end{equation}
where $\|\cdot\|_{U^s[N]}$ stands for the $s$-th Gowers uniformity norm on the interval $[N]$ 
which we will shortly recall. This reduction can be performed provided that $|f_1|,\dots,|f_t|$ 
are simultaneously majorized by a function $\nu$ satisfying the ``linear forms condition" 
(see \cite[Definition 6.2]{GT2}). The second (and more substantial) step of the method reduces
the estimate \eqref{e5.2} to a non-correlation estimate
\begin{equation} \label{e5.3}
\ave_{n\in [N]} \, (f_i(n)-1) F(g^n\cdot x) =o_{s,G/\Gamma,M}(1)
\end{equation}
where $G/\Gamma$ is an $(s-1)$-step nilmanifold equipped with a smooth Riemannian metric~$d_{G/\Gamma}$,
$F\colon G/\Gamma\to [-1,1]$ is a function with Lipschitz constant at most $M$, $g\in G$ and $x\in G/\Gamma$.
(We recall the notion of an $(s-1)$-step nilmanifold below.) For \textit{bounded} functions, 
the equivalence between \eqref{e5.2} and \eqref{e5.3} is a deep result which is known as the 
\textit{inverse theorem for the Gowers $U^s[N]$-norm} and is due to Green, Tao and Ziegler \cite{GTZ}. 
One of the main steps in \cite{GT2} was to transfer the inverse theorem to the unbounded setting.
This was achieved with the \textit{relative inverse theorem for the 
Gowers $U^s[N]$-norm}---see \cite[Proposition 10.1]{GT2}---which can be applied  provided that 
$|f_i|$ is majorized by a function $\nu$ satisfying the aforementioned linear forms condition
and an additional pseudorandomness condition known as the ``correlation condition" (see \cite[Definition 6.3]{GT2}).

Recently, a part of the proof of \cite[Proposition 10.1]{GT2} was revisited in \cite{TZ1}. 
One pleasant consequence of the approach in \cite{TZ1} is that the relative inverse theorem 
(and, consequently, the whole nilpotent Hardy--Littlewood method) can be applied assuming 
that the majorant $\nu$ satisfies only the linear forms condition\footnote{The possibility 
that one could dispense with the need for the correlation condition entirely, 
was also noted in \cite[Appendix A]{FGKT}.}.

Our aim in this section is to give yet another proof of the relative inverse theorem using 
a norm-type pseudorandomness condition. To this end, it is convenient at this point to properly 
introduce the concepts discussed so far.

\subsubsection{Uniformity norms on intervals} \label{subsubsec5.1.1}

Let $N\meg 1$ be an integer, and let $f\colon [N]\to\rr$ be a function. We select an integer $N'>2N$ and we identify
(in the obvious way) the discrete interval $[N]$ with a subset of the cyclic group $\zz_{N'}\coloneqq \zz/N'\zz$.
The \textit{Gowers uniformity norm  $\|f\|_{U^s[N]}$ of\, $f$ on the interval $[N]$} is defined by setting
\begin{equation} \label{e5.4}
\|f\|_{U^s[N]}\coloneqq \|f\mathbf{1}_{[N]}\|_{U^s(\zz_{N'})}/ \|\mathbf{1}_{[N]}\|_{U^s(\zz_{N'})}
\end{equation}
where $\mathbf{1}_{[N]}\colon \zz_{N'}\to \{0,1\}$ stands for the indicator function of $[N]$. 
We note that the quantity $\|f\|_{U^s[N]}$ is, in fact, intrinsic and is independent of the choice 
of $N'$---see \cite[Appendix B]{GT2} for more details.

\subsubsection{Nilmanifolds} \label{subsubsec5.1.2}

Let $s\meg 2$ be an integer and recall that an \emph{$(s-1)$-step nilmanifold} is a homogeneous space 
$X\coloneqq G/\Gamma$ where $G$ is an $(s-1)$-step nilpotent, connected, simply connected Lie group, and 
$\Gamma$ is a discrete cocompact subgroup of~$G$. The group $G$ acts on $G/\Gamma$ by left multiplication 
and this action will be denoted by $(g,x)\mapsto g\cdot x$. As in \cite{GT2}, we will assume that each 
nilmanifold $G/\Gamma$ is equipped with a smooth Riemannian metric $d_{G/\Gamma}$; in particular,
if $F\colon G/\Gamma\to\rr$ is a function, then its Lipschitz constant is computed using the metric $d_{G/\Gamma}$.

\subsection{The main result} \label{subsec5.2}

We are ready to state the main result in this section. As we have indicated, 
it is a refinement\footnote{We notice that \cite[Proposition 10.1]{GT2} yields the 
existence of a finite family of nilmanifolds, but by taking their product, one 
can also formulate this result with a single nilmanifold; see, \textit{e.g.}, the remarks right after 
\cite[Conjecture 1.2]{GTZ}.} of \cite[Proposition 10.1]{GT2}.
\begin{thm} \label{t5.1}
For every integer $s\meg 2$, every $C\meg 20$ and every $0<\delta\mik 1$ there exist $\eta>0$, 
a constant $M>0$, a $(s-1)$-step nilmanifold $G/\Gamma$ equipped with a smooth Riemannian 
metric $d_{G/\Gamma}$, and a constant $c>0$ with the following property. Let $N$ be a 
positive integer, and let $N'\in [CN,2CN]$ be a prime. Also let $\nu\colon \zz_{N'}\to \rr^+$ satisfying
\begin{equation} \label{e5.5}
\|\nu-1\|_{U^{2s}(\zz_{N'})}\mik \eta.
\end{equation}
Finally, let $f\colon [N]\to\rr$ with $|f(n)|\mik \nu(n)$ for every $n\in [N]$. 
If\, $\|f\|_{U^s[N]}\meg \delta$, then there exist a function $F\colon G/\Gamma\to [-1,1]$ 
with Lipschitz constant at most $M$, $g\in G$, and $x\in G/\Gamma$ such that
\begin{equation} \label{e5.6}
|\ave_{n\in [N]}\, f(n) F(g^n\cdot x)| \meg c.
\end{equation}
\end{thm}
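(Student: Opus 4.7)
The plan is to reduce Theorem \ref{t5.1} to the bounded inverse theorem of Green, Tao and Ziegler \cite{GTZ} via the Koopman--von Neumann-type decomposition from Corollary \ref{c4.4}, and then transfer the resulting nilsequence correlation back to $f$ using an anti-uniformity argument adapted to the relative setting.

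First I would identify $[N]$ with a subset of $\zz_{N'}$ in the natural way and extend $f$ to all of $\zz_{N'}$ by setting it to $0$ outside $[N]$; the majorization $|f|\mik\nu$ persists. Since $N'\in [CN,2CN]$, a direct computation gives $\|\mathbf{1}_{[N]}\|_{U^s(\zz_{N'})}\gg_{C,s}1$, so the hypothesis $\|f\|_{U^s[N]}\meg\delta$ together with \eqref{e5.4} yields $\|f\|_{U^s(\zz_{N'})}\meg c_0\delta$ for some $c_0=c_0(C,s)>0$. Applying Corollary \ref{c4.4} to $f$ on $\zz_{N'}$ produces a function $h\colon\zz_{N'}\to[-1,1]$ with $\|f-h\|_{U^s(\zz_{N'})}=o_{\eta\to 0;s}(1)$; choosing $\eta$ sufficiently small in terms of $\delta$, $s$ and $C$ then forces $\|h\|_{U^s(\zz_{N'})}\meg c_0\delta/2$.

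Next I apply the bounded inverse theorem of \cite{GTZ}---in its formulation for the cyclic group $\zz_{N'}$ of prime order---to the $[-1,1]$-valued function $h$. This produces an $(s-1)$-step nilmanifold $(G/\Gamma,d_{G/\Gamma})$ drawn from a finite collection $\calm$ depending only on $\delta$, $s$ and $C$, a function $F\colon G/\Gamma\to[-1,1]$ with Lipschitz constant at most some $M=M(\delta,s,C)$, and elements $g\in G$, $x\in G/\Gamma$, such that
\[ \Big|\ave_{n\in\zz_{N'}} h(n)\, F(g^n\cdot x)\Big|\meg c_1 \]
for some $c_1=c_1(\delta,s,C)>0$. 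Since $f$ is supported on $[N]$ and $N'\mik 2CN$, the identity $\ave_{n\in\zz_{N'}} f(n)\,F(g^n\cdot x)=(N/N')\ave_{n\in[N]} f(n)\,F(g^n\cdot x)$ reduces \eqref{e5.6} (with $c=c_1/(2C)$, say) to controlling the error term
\[ E\coloneqq \ave_{n\in\zz_{N'}} (h(n)-f(n))\, F(g^n\cdot x). \]

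The main obstacle is precisely this anti-uniformity estimate: one needs to show $|E|=o_{\eta\to 0;s,C}(1)$ for the unbounded function $h-f$, which is majorized by $\nu+1$ and whose $U^s(\zz_{N'})$-norm is $o(1)$ as $\eta\to 0$, against the bounded Lipschitz nilsequence $n\mapsto F(g^n\cdot x)$. The plan for this step is a Cauchy--Schwarz--Gowers chain in the spirit of the proof of Proposition \ref{p2.1}: repeated applications of Fact \ref{f2.2}, combined with the hypothesis $\|\nu-1\|_{U^{2s}(\zz_{N'})}\mik\eta$, allow one to control $|E|$ by $\|h-f\|_{U^s(\zz_{N'})}$ times a $U^s$-dual-type quantity associated with the nilsequence. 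Since bounded Lipschitz nilsequences on an $(s-1)$-step nilmanifold of bounded complexity have bounded $U^s$-dual norm---a standard consequence of the quantitative equidistribution theory of Green and Tao on nilmanifolds---this yields $|E|=o_{\eta\to 0;s,C}(1)$ and completes the argument.
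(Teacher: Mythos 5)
Your overall strategy---approximate $f$ by a bounded $h$ via Corollary \ref{c4.4}, apply the bounded inverse theorem to $h$, and transfer the resulting correlation back to $f$---is the same as the paper's, but there is a genuine gap in your final ``anti-uniformity'' step. Once $\|f-h\|_{U^s}$ is known to be small, bounding $E=\ave\big[(h-f)(n)\,F(g^n\cdot x)\big]$ has nothing further to do with $\nu$ or with Fact \ref{f2.2}: that fact only controls correlations of the specific multilinear Gowers form $\prod g_\omega(x+\omega\cdot\mathbf{h})$, and a nilsequence $n\mapsto F(g^n\cdot x)$ is not of that shape, so no ``Cauchy--Schwarz--Gowers chain'' applies here. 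What you actually need is that the nilsequence is anti-uniform, i.e.\ that $\|n\mapsto F(g^n\cdot x)\|_{U^s[N]^*}$ is bounded, and you assert this is ``a standard consequence of the quantitative equidistribution theory.'' It is not: this is precisely the delicate point of the whole argument, and it is not known (nor claimed in \cite{GT2}) that a single bounded Lipschitz nilsequence has bounded dual norm. What is available is \cite[Proposition 11.2]{GT2} (Proposition \ref{p5.4} in the paper): a decomposition $F(g^n\cdot x)=F_1(n)+F_2(n)$ with $\|F_1\|_{\ell_\infty}$ small and $\|F_2\|_{U^s[N]^*}$ bounded, after which one bounds $\ave[(f-h)F_1]$ crudely via $\ave[|f-h|]\mik \ave[\nu]+1=O(1)$ and $\ave[(f-h)F_2]$ via the dual norm. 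The paper even footnotes that the original proof of \cite[Proposition 10.1]{GT2} stumbled on exactly this point. Without Proposition \ref{p5.4} (or an equivalent substitute), your error term $E$ is simply not controlled.

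A secondary issue: Corollary \ref{c4.4} gives smallness of $\|\tilde f-h\|_{U^s(\zz_{N'})}$ on the group, whereas the inverse theorem of \cite{GTZ} (Theorem \ref{t5.3}) is stated for the interval norm $U^s[N]$, and passing between the two is not free---multiplying $\tilde f - h$ by $\mathbf{1}_{[N]}$ can inflate its $U^s$-norm. This is exactly why the paper inserts Corollary \ref{c5.5}, whose proof uses a smoothed cutoff $\varphi$ and the estimate $\|\widehat{\varphi}\|_{\ell_1(\zz_{N'})}\mik 4L/l$. Your alternative of invoking a cyclic-group formulation of the inverse theorem and then exploiting that $f$ is supported on $[N]$ could in principle sidestep this, but you would then have to supply that formulation and check that the nilsequence it produces is evaluated at integers $n\in[N]$ rather than at residues modulo $N'$; as written, this step is also left unjustified.
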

We notice that the estimate in \eqref{e5.5} follows if we assume that the function $\nu$ satisfies 
the $(4^s,4s,1)$-linear forms condition in the sense of \cite[Definition 6.2]{GT2}, but \eqref{e5.5} 
is certainly easier to grasp. It is likely that one can follow a similar approach in other instances 
of the transfer method, and replace the linear forms condition with a norm estimate of the form \eqref{e5.5} 
for a suitable uniformity norm\footnote{In this direction we recall (see also \cite{CFZ}) that it is not known
whether for every integer $k\meg 3$ there exists an integer $s\meg k-1$ such that the relative Szemer\'{e}di 
theorem for $k$-term arithmetic progressions holds true under the condition $\|\nu-1\|_{U^s(\zz_N)}=o(1)$.}.
\begin{rem} \label{r4}
Using Corollary \ref{c3.4} instead of Proposition \ref{p2.1}, it is easy to verify that Theorem \ref{t5.1}
also holds if the majorant $\nu$ satisfies $\|\nu-1\|_{U^s_4(\zz_{N'})}\mik \eta$, a condition which is 
slightly different from \eqref{e5.5}. However, the use of the $U^{2s}(\zz_{N'})\text{-norm}$ in Theorem \ref{t5.1} 
is conceptually more natural in the present arithmetic context.
\end{rem}

\subsection{Preliminary tools} \label{subsec5.3}

As in \cite{GT2}, the proof of Theorem \ref{t5.1} is based on three ingredients. The first one is 
the inverse theorem for the Gowers $U^s[N]$-norm~\cite{GTZ}. It gives a criterion for checking that 
a bounded arithmetic function has non-negligible uniformity norm.
\begin{thm} \label{t5.3}
For every integer $s\meg 2$ and every $0<\delta\mik 1$ there exist a constant $M>0$, 
a $(s-1)$-step nilmanifold $G/\Gamma$ equipped with a smooth Riemannian metric $d_{G/\Gamma}$,
and~a~constant $c>0$ with the following property. Let $N$ be a positive integer, and let $f\colon [N]\to [-1,1]$ 
such that $\|f\|_{U^s[N]}\meg \delta$. Then there exist a function $F\colon G/\Gamma\to [-1,1]$ 
with Lipschitz constant at most $M$, $g\in G$, and $x\in G/\Gamma$ such that
\begin{equation} \label{e5.7}
|\ave_{n\in [N]}\, f(n) F(g^n\cdot x)| \meg c.
\end{equation}
\end{thm}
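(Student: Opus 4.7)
The plan is a major induction on $s$, following the Green--Tao--Ziegler program. The base case $s=2$ reduces to classical Fourier analysis: by Parseval, if $\|f\|_{U^2[N]}\meg \delta$ then $f\mathbf{1}_{[N]}$ has a Fourier coefficient of size $\gg \delta^2$, so $f$ correlates at level $\gg \delta^2$ with a linear character $n\mapsto e(\xi n)$. Such a character is a bounded Lipschitz function on the $1$-step nilmanifold $\rr/\zz$, which gives the conclusion with $\calm$ a singleton and $M,c$ depending polynomially on $\delta$.

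For the inductive step, assume the theorem has been established for $s-1$ and suppose $\|f\|_{U^s[N]}\meg \delta$. The starting point is the recursive identity
\[ \|f\|_{U^s[N]}^{2^s} \approx \ave_h\, \|\Delta_h f\|_{U^{s-1}[N]}^{2^{s-1}}, \]
where $\Delta_h f(n)\coloneqq f(n+h)f(n)$ is the multiplicative derivative. A standard averaging argument then yields $\|\Delta_h f\|_{U^{s-1}[N]}\gg_\delta 1$ for a positive density of $h\in [-N,N]$. The inductive hypothesis, applied to each such $\Delta_h f$ (which is itself bounded by~$1$), produces, for each relevant $h$, an $(s-2)$-step nilsequence $\chi_h(n)=F_h(g_h^n\cdot x_h)$ correlating with $\Delta_h f$, drawn from a fixed finite list $\calm_{s-1}$ of nilmanifolds with Lipschitz budget $M_{s-1}$.

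The main obstacle---and the substantive content of the Green--Tao--Ziegler theorem---is the \emph{symmetrization step}: one must upgrade the $h$-indexed family $\{\chi_h\}$ of $(s-2)$-step nilsequences into a single $(s-1)$-step nilsequence that correlates with $f$ itself. The strategy proceeds in two stages. First, apply several rounds of the Gowers--Cauchy--Schwarz inequality to the correlation identity $\ave_h\ave_n \Delta_h f(n)\chi_h(n)\gg_\delta 1$ in order to extract a symmetry relation forcing the \emph{top-order part} of $\chi_h$ to be (modulo lower-order corrections) a ``bracket bilinear'' object in $(h,n)$. Second, invoke the Green--Tao quantitative equidistribution theorem for polynomial orbits on nilmanifolds: any non-negligible bias of the parameters $(g_h)_{h}$ forces them to factor through a polynomial sequence of degree one in~$h$, which in turn integrates, via a Hall--Petresco-type expansion, to a degree-$s$ polynomial orbit on an $(s-1)$-step filtered nilmanifold---i.e., an honest $(s-1)$-step nilsequence correlating with $f$.

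Finally, one must check that the complexity (dimension, step, injectivity radius, smoothness of metric) of the resulting nilmanifold is controlled uniformly in $s$ and $\delta$, so that the union $\calm$ over all inductive outputs remains finite. This is ensured by the quantitative nature of the equidistribution theorem at each stage and by a compactness/pigeonhole argument on the (finite-dimensional) space of filtered nilmanifolds of bounded dimension and step. I expect the symmetrization/integration step to be by far the main obstacle: controlling the cocycle obstructions that arise when trying to turn the bilinear $h$-data into a genuine polynomial orbit is exactly what forces one into the full machinery of filtered nilmanifolds, and occupies the bulk of the Green--Tao--Ziegler argument.
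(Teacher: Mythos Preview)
The paper does not prove Theorem~\ref{t5.3} at all: it is quoted as the inverse theorem of Green, Tao and Ziegler \cite{GTZ} and used as a black box in the proof of Theorem~\ref{t5.1}. So there is nothing to compare your argument against within this paper.

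That said, your outline is a fair high-level description of the GTZ strategy: the $U^2$ base case via Fourier analysis, the recursive identity producing $(s-2)$-step nilsequence correlations for many derivatives $\Delta_h f$, and then the ``symmetrization/integration'' step upgrading the family $\{\chi_h\}$ to a single $(s-1)$-step nilsequence. You correctly flag the last step as the heart of the matter. Be aware, though, that what you describe in a paragraph occupies essentially the entire \cite{GTZ} paper; the passage from ``the top-order data of $\chi_h$ behaves bilinearly in $(h,n)$'' to ``there is an $(s-1)$-step nilsequence correlating with $f$'' requires the full apparatus of nilsequence linearization, the quantitative Ratner/Leibman theory, and a delicate cohomological argument to eliminate the cocycle obstructions. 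Your sketch is not wrong, but it is a plan rather than a proof, and for the purposes of the present paper one should simply cite \cite{GTZ} as the authors do.
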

It is more natural to formulate Theorem \ref{t5.3} for complex-valued functions which are bounded in magnitude 
by $1$; however, we will not need the complex version of Theorem \ref{t5.3} for the proof of Theorem \ref{t5.1}.

To state the second ingredient, we first recall some definitions. Let $s\meg 2$ be an integer. 
Also let $N$ be a positive integer, let $F\colon [N]\to\rr$ be a function, and define the
\textit{dual uniformity norm $\|F\|_{U^s[N]^*}$ of\, $F$} by the rule
\begin{equation} \label{e5.8}
\|F\|_{U^s[N]^*}\coloneqq \sup\big\{ |\ave_{n\in [N]}\, f(n)F(n)|: \|f\|_{U^s[N]}\mik 1\big\}.
\end{equation}
We will need the following result which follows from \cite[Proposition 11.2]{GT2}.
\begin{prop} \label{p5.4}
Let $s\meg 2$ be an integer, let $(G/\Gamma,d_{G/\Gamma})$ be an $(s-1)$-step nilmanifold, and let $M>0$.
Also let $F\colon G/\Gamma\to [-1,1]$ be a function with Lipschitz constant at most~$M$, $g\in G$, and $x\in G/\Gamma$.
Finally, let $N$ be a positive integer, and let $0<\ee\mik 1$. Then there exists a decomposition
\begin{equation} \label{e5.9}
F(g^n\cdot x)= F_1(n) + F_2(n) \ \ \text{for every } n\in [N]
\end{equation}
where the functions $F_1,F_2\colon [N]\to \rr$ obey the estimates
\begin{equation} \label{e5.10}
\|F_1\|_{\ell_\infty}=O(\ee) \ \text{ and } \ \|F_2\|_{U^s[N]^*}=O_{s,M,\ee,G/\Gamma}(1).
\end{equation}
\end{prop}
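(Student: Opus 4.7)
The plan is to uniformly approximate the Lipschitz function $F$ on the compact nilmanifold $G/\Gamma$ by a finite linear combination of ``bumps'' of controlled complexity, and then invoke the fact that a Lipschitz nilsequence on a fixed nilmanifold is---up to a multiplicative constant---a dual function for the Gowers $U^s[N]$-norm.

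First, I would produce an approximation $\tilde F\colon G/\Gamma\to \rr$ with $\|F-\tilde F\|_\infty\mik \ee$ whose structural complexity is controlled in terms of $M$, $\ee$, and the metric geometry of $(G/\Gamma,d_{G/\Gamma})$. Using compactness of $G/\Gamma$ and the $M$-Lipschitz hypothesis, one picks an $(\ee/M)$-net $\{y_j\}_{j=1}^J\subseteq G/\Gamma$, with $J=O_{M,\ee,G/\Gamma}(1)$, together with a Lipschitz partition of unity $\{\phi_j\}_{j=1}^J$ subordinate to this net whose members have Lipschitz constants bounded in terms of the same parameters. Setting $\tilde F\coloneqq \sum_{j=1}^J F(y_j)\phi_j$, the $M$-Lipschitz property of $F$ gives $\|F-\tilde F\|_\infty=O(\ee)$.

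With this in hand, I would define the desired decomposition by
\[ F_1(n)\coloneqq F(g^n\cdot x)-\tilde F(g^n\cdot x) \text{ and } F_2(n)\coloneqq \tilde F(g^n\cdot x), \ n\in [N]. \]
The estimate $\|F_1\|_{\ell_\infty}=O(\ee)$ is then an immediate consequence of the previous step.

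The main obstacle is the bound $\|F_2\|_{U^s[N]^*}=O_{s,M,\ee,G/\Gamma}(1)$. Expanding $F_2(n)=\sum_{j=1}^J F(y_j)\,\phi_j(g^n\cdot x)$ with $|F(y_j)|\mik 1$, it suffices to show that for each fixed Lipschitz function $\phi\colon G/\Gamma\to\rr$ whose Lipschitz constant is controlled in terms of $M$, $\ee$, and $G/\Gamma$, the nilsequence $n\mapsto \phi(g^n\cdot x)$ has $U^s[N]^*$-norm bounded uniformly in $g\in G$ and $x\in G/\Gamma$. This is the substantive content of \cite[Proposition 11.2]{GT2}: one realizes such a nilsequence, up to a bounded multiplicative factor, as an ``orbit integral'' of products of $2^s-1$ functions on $G/\Gamma$ and then applies the Gowers--Cauchy--Schwarz inequality to deduce a $U^s[N]^*$-bound depending only on $s$, $\ee$, $M$, and the geometry of $G/\Gamma$. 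Summing over the $J$ summands, weighted by the quantities $|F(y_j)|\mik 1$, produces the desired estimate and completes the argument.
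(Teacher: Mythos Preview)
Your partition-of-unity step is a detour that does no work. Each bump $\phi_j$ you produce is itself just a Lipschitz function on $G/\Gamma$, so the sequence $n\mapsto\phi_j(g^n\cdot x)$ is again a Lipschitz nilsequence of exactly the same type as $n\mapsto F(g^n\cdot x)$. Bounding $\|\phi_j(g^n\cdot x)\|_{U^s[N]^*}$ is therefore the \emph{same} problem you started with, only with a different Lipschitz constant; if that bound were available you could apply it directly to $F$ and take $F_1=0$. In other words, the approximation $F\rightsquigarrow\tilde F$ gives you no structural gain, and your final appeal to \cite[Proposition~11.2]{GT2} is essentially citing the result you are meant to establish.

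The actual mechanism in \cite[Proposition~11.2]{GT2}---which is all the paper invokes---is algebraic rather than metric. One expands $F$ not into geometric bumps but into \emph{vertical characters} (a Fourier expansion along the central torus of $G/\Gamma$), truncates this expansion to get the $O(\ee)$ error $F_1$, and observes that each surviving term has a vertical frequency. It is precisely this vertical-character structure, together with the $(s-1)$-step nilpotency of $G$, that allows one to realize the remaining piece as an averaged nilsequence and hence as an anti-uniform (dual) function via Gowers--Cauchy--Schwarz. A partition of unity never sees this nilpotent structure, which is why your reduction is circular.
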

We point out that, by \cite[Proposition 11.2]{GT2}, one can additionally ensure that the function 
$F_2$ in the above decomposition is an ``averaged nilsequence" in the sense of \cite[Definition~11.1]{GT2}. 
We also note that the proof of \cite[Proposition 11.2]{GT2} is non-effective and yields no estimate 
for the dual uniformity norm of $F_2$. However, explicit estimates can be obtained by combining 
\cite[Lemmas A.2 and A.3]{M}---see \cite[Appendix~A]{M} for more details on this approach.

The last ingredient needed for the proof of Theorem \ref{t5.1} is the following version of 
Corollary \ref{c4.4} which concerns functions defined on intervals of $\zz$.
\begin{cor} \label{c5.5}
For every integer $s\meg 2$, every $C\meg 20$ and every $0<\ee\mik 1$ there exist a positive integer
$N_0$ and $\eta>0$ with the following property. Let $N\meg N_0$ be an integer, and let $N'\in [CN,2CN]$ be a prime. 
Also let $\nu\colon \zz_{N'}\to\rr^+$ satisfying
\begin{equation} \label{e5.11}
\|\nu-1\|_{U^{2s}(\zz_{N'})}\mik \eta.
\end{equation}
Finally, let $f\colon [N]\to\rr$ with $|f(n)|\mik \nu(n)$ for every $n\in [N]$. Then there exists 
a function $h\colon [N]\to [-1,1]$ such that
\begin{equation} \label{e5.12}
\|f-h\|_{U^s[N]} \mik \ee.
\end{equation}
Moreover, if $f$ is nonnegative, then $h$ is also nonnegative.
\end{cor}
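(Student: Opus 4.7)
The plan is to reduce Corollary \ref{c5.5} to Corollary \ref{c4.4} by transferring the problem to the cyclic group $\zz_{N'}$: zero-extend $f$ to $\tilde{f}\colon \zz_{N'}\to\rr$, apply Corollary \ref{c4.4} there, and restrict the resulting bounded approximant back to $[N]$. For given $s$, $C$, $\ee$, first choose $N_0$ large enough that the standard lower bound $\|\mathbf{1}_{[N]}\|_{U^s(\zz_{N'})} \gg_{s,C} 1$ holds for all $N\meg N_0$ (valid because the density $N/N'\meg 1/(2C)$ is bounded below), and $\eta>0$ small enough so that all $o_{\eta\to 0;s}(1)$ errors appearing below are bounded by $\ee$ times a suitable constant.

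Now define $\tilde{f} \coloneqq f \mathbf{1}_{[N]}\colon \zz_{N'}\to\rr$, which satisfies $|\tilde{f}| \mik \nu$ on all of $\zz_{N'}$. I would invoke Corollary \ref{c4.4} for the pair $(\tilde{f}, \nu)$ on $\zz_{N'}$---its hypothesis $\|\nu-1\|_{U^{2s}(\zz_{N'})}\mik\eta$ is precisely the given one---to produce $\tilde{h}\colon \zz_{N'}\to[-1,1]$ with $\|\tilde{f}-\tilde{h}\|_{U^s(\zz_{N'})} = o_{\eta\to 0;s}(1)$, nonnegative when $f$ is (by the moreover clause). Set $h \coloneqq \tilde{h}|_{[N]}\colon [N]\to[-1,1]$.

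By definition \eqref{e5.4} of the interval uniformity norm together with the lower bound on $\|\mathbf{1}_{[N]}\|_{U^s(\zz_{N'})}$, the desired inequality $\|f-h\|_{U^s[N]}\mik\ee$ reduces to showing $\|(f-h)\mathbf{1}_{[N]}\|_{U^s(\zz_{N'})} = o(1)$. Since $(f-h)\mathbf{1}_{[N]}$ and $(\tilde{f}-\tilde{h})\mathbf{1}_{[N]}$ coincide as functions on $\zz_{N'}$, and this function is majorized in magnitude by $\nu+1$, an application of Proposition \ref{p2.1} with majorant $\nu'\coloneqq (\nu+1)/2$ (which obeys $\|\nu'-1\|_{U^{2s}(\zz_{N'})}\mik\eta/2$) reduces the task to establishing the weak-norm bound $\|(\tilde{f}-\tilde{h})\mathbf{1}_{[N]}\|_{w^s(\zz_{N'})} = o_{\eta\to 0;s}(1)$.

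This last weak-norm estimate is the main technical hurdle. Corollary \ref{c4.4} supplies $\|\tilde{f}-\tilde{h}\|_{w^s(\zz_{N'})}\mik \|\tilde{f}-\tilde{h}\|_{U^s(\zz_{N'})} = o(1)$, but multiplication by $\mathbf{1}_{[N]}$ does not automatically preserve this smallness because the restriction $\tilde{h}|_{[N]^c}$ could contribute a constant-order boundary term. The resolution I would pursue is a Fourier-smoothing argument: approximate $\mathbf{1}_{[N]}$ by a cutoff $\phi\colon\zz_{N'}\to [0,1]$ with uniformly bounded Fourier $\ell^1$-norm $\|\widehat{\phi}\|_{\ell^1}$ and small $L^1$-distance $\|\mathbf{1}_{[N]}-\phi\|_{L^1(\zz_{N'})}$, expand $\phi$ into characters $\chi_k$ of $\zz_{N'}$, and absorb each $\chi_k(x)$ into the dual test functions appearing in the $w^s$-norm via the identity $\chi_k(x) = \chi_k(x+\omega_0\cdot\mathbf{h})\,\overline{\chi_k(\omega_0\cdot\mathbf{h})}$ for a fixed $\omega_0\in\{0,1\}^s\setminus\{0^s\}$. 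This produces a bound proportional to $\|\widehat{\phi}\|_{\ell^1}\cdot\|\tilde{f}-\tilde{h}\|_{w^s(\zz_{N'})}$ plus a negligible error from the smoothing step, and optimizing the cutoff parameter produces the desired $o(1)$ estimate. The nonnegativity assertion then passes directly from the corresponding clause in Corollary \ref{c4.4}.
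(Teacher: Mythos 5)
Your overall architecture is the paper's: zero-extend $f$ to $\tilde f$ on $\zz_{N'}$, apply Corollary \ref{c4.4} there, restrict the bounded approximant back to $[N]$, and control the multiplication by $\mathbf{1}_{[N]}$ with a Fourier-smoothed cutoff. The gap is in the error term of that last step. You split $(\tilde f-\tilde h)\mathbf{1}_{[N]}=(\tilde f-\tilde h)\phi+(\tilde f-\tilde h)(\mathbf{1}_{[N]}-\phi)$ and call the second summand negligible because $\|\mathbf{1}_{[N]}-\phi\|_{L^1}$ is small. That inference is valid only for bounded functions: here $|\tilde f-\tilde h|\mik\nu+1$, and the hypothesis $\|\nu-1\|_{U^{2s}(\zz_{N'})}\mik\eta$ does not by itself bound $\ave[\nu\,|\mathbf{1}_{[N]}-\phi|]$ in terms of $\|\mathbf{1}_{[N]}-\phi\|_{L^1}$ (the uniformity norm does not control correlation with an arbitrary sparse indicator; one would need a separate Fourier/dual-norm argument to rule out $\nu$ concentrating near the endpoints of $[N]$). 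The paper's proof is engineered precisely to avoid this: the trapezoid $\varphi$ is chosen to equal $1$ on the support of $\tilde f$, and the decomposition is $\tilde f-H\mathbf{1}_{[N]}=(\tilde f-H)\varphi+H(\varphi-\mathbf{1}_{[N]})$, so the unbounded function meets only the smooth cutoff (handled by $\|\widehat{\varphi}\|_{\ell_1}\mik 4L/l$), while the rough difference $\varphi-\mathbf{1}_{[N]}$ meets only the bounded $H$, whose contribution is then at most $\big((2l+1)/N'\big)^{1/2^s}$ by the support bound. Your argument needs the same arrangement ($\phi\equiv 1$ on $[N]$, tapering outside, so that $\tilde f(\mathbf{1}_{[N]}-\phi)=0$); as written, the ``negligible error from the smoothing step'' is not justified.

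Two further points. First, the identity $\chi_k(x)=\chi_k(x+\omega_0\cdot\mathbf{h})\,\overline{\chi_k(\omega_0\cdot\mathbf{h})}$ does not absorb the character into admissible test functions, because $\overline{\chi_k(\omega_0\cdot\mathbf{h})}$ is not of the form $g(x+\omega\cdot\mathbf{h})$ for any $\omega$; you need an identity using several vertices, e.g.\ $\chi_k(x)=\chi_k(x+h_1)\chi_k(x+h_2)\overline{\chi_k(x+h_1+h_2)}$. Second, the detour through the weak norm and Proposition \ref{p2.1} is unnecessary: for $s\meg 2$ the $U^s$-norm is modulation-invariant, so the multiplier inequality $\|g\varphi\|_{U^s(\zz_{N'})}\mik\|\widehat{\varphi}\|_{\ell_1}\|g\|_{U^s(\zz_{N'})}$ (this is \cite[(11.11)]{TV}, which the paper invokes) applies directly to the $U^s(\zz_{N'})$-estimate already furnished by Corollary \ref{c4.4}; one never needs to descend to the weak norm again. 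Finally, note that $\|\mathbf{1}_{[N]}\|_{U^s(\zz_{N'})}\meg N/N'\meg 1/(2C)$ holds for every $N$, so the role of $N_0$ is not to secure this lower bound but to ensure the cutoff has room to taper (i.e.\ $\lfloor\alpha N\rfloor\meg 2$).
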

\begin{proof}
It is a consequence of Corollary \ref{c4.4} and a standard truncation argument. 
Specifically, fix the parameters $s,C$ and $\ee$, and set
\begin{equation} \label{e5.13}
\alpha=\Big(\frac{\ee}{32 C}\Big)^{2^s} \ \text{ and } \ N_0=\lceil 2/\alpha\rceil.
\end{equation}
Moreover, by Corollary \ref{c4.4}, we select $0<\eta\mik 1$ such that for every finite additive group $Z$, 
every $\nu'\colon Z\to \rr^+$ satisfying $\|\nu'-1\|_{U^{2s}(Z)}\mik \eta$ and every $g\colon Z\to \rr$ 
with $|g|\mik \nu'$ there exists $w\colon Z\to [-1,1]$ such that $\|g-w\|_{U^s(Z)}\mik \ee\alpha/(32C)$. 
We will show that $N_0$ and $\eta$ are as desired.

So, let $N, N', \nu$ and $f$ be as in the statement of the corollary, and let $\tilde{f}\colon \zz_{N'}\to\rr$ 
be the extension of $f$ obtained by setting $\tilde{f}(n)=0$ if $n\notin [N]$. By the choice of $\eta$, 
there exists $H\colon \zz_{N'}\to [-1,1]$ satisfying
\begin{equation} \label{e5.14}
\|\tilde{f}-H\|_{U^s(\zz_{N'})}\mik \frac{\ee\alpha}{32C}.
\end{equation}
We claim that $\|f-h\|_{U^s[N]}\mik \ee$ where $h\colon [N]\to [-1,1]$ is the restriction of $H$ on $[N]$. 
Indeed, set $l=\lfloor \alpha N\rfloor$ and let $2L$ be the least even integer greater than or equal to $N$; 
notice that $N\meg L\meg l\meg 2$ and $\alpha/2\mik l/N\mik \alpha$. Next, write $N'=2k+1$ and identify $\zz_{N'}$ 
with the interval $\{-k,\dots,k\}$. Let $\varphi\colon \zz_{N'}\to [0,1]$ be the cut-off function which is nonzero 
on the set $\{-l+2,\dots,2L+l-1\}$, increases linearly from $0$ to~$1$ between $-l+1$ and $1$, is equal to $1$ on $[2L]$, 
and decreases linearly from $1$ to $0$ between $2L$ and $2L+l$. Observe that $\tilde{f}\varphi=\tilde{f}$ and so, 
setting $\tilde{h}\coloneqq H\mathbf{1}_{[N]}$, we have
\begin{equation} \label{e5.15}
\tilde{f}-\tilde{h}= (\tilde{f}-H)\varphi + H(\varphi-\mathbf{1}_{[N]}).
\end{equation}
Also note that the Fourier transform $\widehat{\varphi}$ of $\varphi$ satisfies the estimate
$\|\widehat{\varphi}\|_{\ell_1(\zz_{N'})}\mik 4L/l$ (see, \textit{e.g.}, the proof of Lemma A.1 in \cite{FH} 
where this is explained in some detail). Hence, by the triangle inequality and \cite[(11.11)]{TV},
we have\footnote{Note that here we work with the complex version of the Gowers uniformity norm.}
\begin{equation} \label{e5.16}
\|(\tilde{f}-H)\varphi\|_{U^s(\zz_{N'})} \mik \|\widehat{\varphi}\|_{\ell_1(\zz_{N'})} \cdot 
\|\tilde{f}-H\|_{U^s(\zz_{N'})} \mik \frac{4N}{l}\, \|\tilde{f}-H\|_{U^s(Z_{N'})}.
\end{equation}
On the other hand, since $H (\varphi-\mathbf{1}_{[N]})$ is bounded in magnitude by $1$ and is supported 
on a subset of $\zz_{N'}$ of cardinality at most $2l+1$, we obtain that
\begin{equation} \label{e5.17}
\|H(\varphi-\mathbf{1}_{[N]})\|_{U^s(Z_{N'})}\mik \Big(\frac {2l+1}{N'}\Big)^{1/2^s}\mik  \Big(\frac {3l}{C N}\Big)^{1/2^s}.
\end{equation}
Finally, note that $\|\mathbf{1}_{[N]}\|_{U^s(Z_{N'})}\!\meg \ave[\mathbf{1}_{[N]}] =N/N'\meg 1/2C$.  Thus,
by \eqref{e5.15}--\eqref{e5.17}, the triangle inequality and the definition of the $U^s[N]$-norm, we see that
\[ \|f-h\|_{U^s[N]}\mik 2C\, \Big( \frac{4N}{l}\, \|\tilde{f}-H\|_{U^s(Z_{N'})}+ \Big(\frac {3l}{C N}\Big)^{1/2^s}\Big). \]
By the previous inequality and taking into account the choice of $\alpha,l$ and the estimate \eqref{e5.14}, we conclude
that $\|f-h\|_{U^s[N]}\mik \ee$.
\end{proof}
\begin{rem} \label{r5.6}
We note that Corollary \ref{c5.5} also holds if the function $f$ is majorized 
by a function $\nu\colon [N]\to\rr^+$ which satisfies $\|\nu-1\|_{U^{2s}[N]}=o(1)$.
Indeed, given any integer $N'>2N$, the hypothesis $\|\nu-1\|_{U^{2s}[N]}=o(1)$ allows 
us to extend the function $\nu$ to a function $\nu'\colon \zz_{N'}\to\rr^+$ which also
satisfies $\|\nu'-1\|_{U^{2s}(\zz_{N'})}=o(1)$. (For instance, define $\nu'$ by setting 
$\nu'(n)=\nu(n)$ if $n\in [N]$ and $\nu'(n)=1$ otherwise.) Using this observation,
the desired approximation follows from Corollary \ref{c5.5}.
\end{rem}

\subsection{Proof of Theorem \ref{t5.1}} \label{subsec5.4}

We follow the proof from \cite[Proposition 10.1]{GT2} quite closely\footnote{Actually, there is a minor oversight 
in the proof of \cite[Proposition 10.1]{GT2} which is fixed in the present paper. Specifically, the appeal to 
Proposition 8.2 at the top of \cite[page 1796]{GT2} is invalid without appeal to the material from \cite[Section 11]{GT2}.}. 
We first observe that, by compactness, for every positive integer $d$ there exists a constant $D\meg 1$ such that for every 
$N\in [d]$ and every $f\colon [N]\to\rr$ we have that $\|f\|_{U^s[N]}\mik D\|\widehat{f}\|_{\ell_{\infty}}$. (Here, we identify 
$[N]$ with $\zz_N$.) Therefore, if $N\in [d]$, then Theorem \ref{t5.1} follows using as nilmanifold the torus $\rr/\zz$. 
Thus, at the cost of worsening the constants, it is enough to prove Theorem \ref{t5.1} for every sufficiently large
positive integer~$N$.

So, fix the parameters $s,C$ and $\delta$, and let $M, (G/\Gamma,d_{G/\Gamma})$ and $c$ be as in Theorem~\ref{t5.3} 
when applied for $\delta/2$. Next, by Proposition \ref{p5.4}, we select $K\meg 1$ such that for every function 
$F\colon G/\Gamma\to [-1,1]$ with Lipschitz constant at most $M$, every $g\in G$, every $x\in G/\Gamma$ 
and every integer $N\meg 1$ we have the decomposition \eqref{e5.9} with $\|F_1\|_{\ell_{\infty}}\mik c/12$ 
and $\|F_2\|_{U^s[N]^*}\mik K$. Finally, let $N_0$ and $\eta$ be as in Corollary \ref{c5.5} when applied 
for $\ee\coloneqq\min\{\delta/2,c/(4K)\}$. We claim that Theorem \ref{t5.1} holds true for 
$\eta,M, (G/\Gamma,d_{G/\Gamma})$ and $c/2$ provided that $N\meg N_0$. 

Indeed, let $N$ be an arbitrary positive integer with $N\meg N_0$, and let $N',\nu$ and $f$ be as in the 
statement of the theorem. By Corollary \ref{c5.5}, there exists $h\colon [N]\to [-1,1]$ such that
$\|f-h\|_{U^s[N]}\mik \ee$; in particular, we have $\|h\|_{U^s[N]}\meg \delta /2$ and so, by Theorem \ref{t5.3}, 
there exist a function $F\colon G/\Gamma\to [-1,1]$ with Lipschitz 
constant at most $M$, $g\in G$, and $x\in G/\Gamma$ such that
\begin{equation} \label{e5.18}
|\ave_{n\in [N]}\, h(n)F(g^n\cdot x)|\meg c.
\end{equation}
Write $F(g^n\cdot x)=F_1(n)+F_2(n)$ with $\|F_1\|_{\ell_{\infty}}\mik c/12$ and $\|F_2\|_{U^s[N]^*}\mik K$, 
and notice that, by \eqref{e5.18} and the triangle inequality, it suffices to show that
\begin{equation} \label{e5.19}
|\ave_{n\in [N]}\, \big(f(n)-h(n)\big)F_1(n)|\mik \frac{c}{4} \ \text{ and } \
|\ave_{n\in [N]}\, \big(f(n)-h(n)\big)F_2(n)|\mik \frac{c}{4}.
\end{equation}
The first part of \eqref{e5.19} follows from the fact that $\ave\big[|f-h|\big]\mik \ave[\nu+1]\mik 3$ and  
the fact that $\|F_1\|_{\ell_{\infty}}\mik c/12$. On the other hand, by the choice of $\ee$ and $h$, we have
\[ |\ave_{n\in [N]}\, \big(f(n)-h(n)\big)F_2(n)|\mik \|f-h\|_{U^s[N]}\cdot \|F_2\|_{U^s[N]^*}\mik \ee K \mik \frac{c}{4} \]
and the proof is completed.


\appendix

\section{Basic properties of uniformity norms} \label{appendix}

\numberwithin{equation}{section}

\begin{prop} \label{pa.1}
Let $V$ be a nonempty finite set and let $s\meg 2$ be an integer.
\begin{enumerate}
\item[(a)] \emph{$($Gowers--Cauchy--Schwarz inequality$)$} Let $\ell\meg 2$ be an even integer, 
and for every $\omega\in [\ell]^s$ let $F_\omega\colon V^s\to\rr$. Then we have
\begin{equation} \label{ea.1}
\Big| \ave\Big[\prod_{\omega\in [\ell]^s} F_\omega\big(\pi_\omega(x)\big) \Big|\, x\in V^{s\times \ell}\Big]\Big| \mik
\prod_{\omega\in [\ell]^s} \|F_\omega\|_{\square_\ell(V^s)}.
\end{equation}
In particular, if $Z$ is a finite additive group, then we have
\begin{equation} \label{ea.2}
\Big| \ave\Big[\prod_{\omega\in \{0,1\}^s} f_\omega(x+\omega\cdot\mathbf{h})\, \Big|\, x\in Z, 
\mathbf{h}\in Z^s\Big]\Big| \mik \prod_{\omega\in \{0,1\}^s} \|f_\omega\|_{U^s(Z)}
\end{equation}
for every family $\langle f_\omega:\omega\in \{0,1\}^s\rangle$ of real-valued functions on $Z$.
\item[(b)] For every even integer $\ell\meg 2$ the quantity $\|\cdot\|_{\square_\ell(V^s)}$ is a norm on $\rr^{V^s}$. 
Moreover, if\, $\ell_1\mik \ell_2$ are even positive integers, then for every $F\colon V^s\to\rr$ we have 
$\|F\|_{\square_{\ell_1}(V^s)}\mik \|F\|_{\square_{\ell_2}(V^s)}$.
\item[(c)] Let $\ell\meg 2$ be an even integer, let $0<\eta\mik 1$, and let $\nu \colon V^s \to \rr^+$ 
satisfying $\|\nu-1\|_{\square_{\ell+2}(V^s)}\mik \eta$. Then for every $F\colon V^s \to \rr$ with $|F|\mik \nu$ we have
\begin{equation} \label{ea.3}
\|F\|_{\square_\ell(V^s)}\mik \|F\|^{1/\ell^s}_{\square(V^s)}+ o_{\eta\to 0;s,\ell}(1).
\end{equation}
In particular, for every $F\colon V^s \to [-1,1]$ we have $\|F\|_{\square_\ell(V^s)}\mik \|F\|^{1/\ell^s}_{\square(V^s)}$.
\end{enumerate}
\end{prop}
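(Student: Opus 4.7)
The plan is to establish (a) first and then to derive (b) and (c) from it. For (a), I would argue by induction on $s$, peeling off one coordinate at a time via Hölder's inequality. Fixing all columns other than the $i$-th and using the independence of the $\ell$ slots $x_{i,1},\ldots,x_{i,\ell}\in V$, the inner expectation over $V^{\ell}$ decomposes according to the $\ell$ possible values of $\omega_i$; an application of the $\ell$-fold Hölder inequality then peels off one factor of $\|F_\omega\|_{\square_\ell(V^s)}$ per $\omega$, and iterating across the $s$ coordinates yields the stated bound. The additive specialization \eqref{ea.2} follows from \eqref{e3.2} after the change of variables that writes $x+\omega\cdot\mathbf{h}=x_1+\cdots+x_s$ with $x_i\in\{0,h_i\}$ chosen according to $\omega_i$.

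For (b), absolute homogeneity is immediate. For the triangle inequality, I would expand $\|F+G\|_{\square_\ell(V^s)}^{\ell^s}$ multilinearly into $2^{\ell^s}$ cross-terms, bound each via (a), and collect using the binomial theorem to recognize the result as $(\|F\|_{\square_\ell}+\|G\|_{\square_\ell})^{\ell^s}$. Non-negativity of $\|F\|_{\square_\ell(V^s)}^{\ell^s}$, needed in order to extract a real $\ell^s$-th root, follows by induction on $s$ upon rewriting the defining expectation as $\ave[(\ave_y G(y))^\ell]$ via column independence and noting that $\ell$ is even. For the monotonicity $\|\cdot\|_{\square_{\ell_1}}\mik\|\cdot\|_{\square_{\ell_2}}$ with $\ell_1\mik\ell_2$ even, I would express $\|F\|_{\square_{\ell_1}}^{\ell_1^s}$ as an expectation over $V^{s\times\ell_2}$ whose integrand only uses the first $\ell_1$ columns per coordinate, pad the remaining positions with constant-$1$ factors, and apply (a) for the $\square_{\ell_2}$-norm.

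For (c), the plan is first to establish the key intermediate estimate
\begin{equation*}
\|F\|_{\square_\ell(V^s)}^{\ell^s}\mik\|F\|_{\square(V^s)}^{2^s}+o_{\eta\to0;s,\ell}(1),
\end{equation*}
and then to derive the stated bound. For the second step, $|F|\mik\nu$ together with the triangle inequality and monotonicity from (b) yields the a priori estimate $\|F\|_{\square(V^s)}\mik\|\nu\|_{\square_{\ell+2}(V^s)}\mik1+\eta$, so that $\|F\|_\square^{2^s}\mik\|F\|_\square+O_s(\eta)$; the elementary inequality $(a+b)^{1/\ell^s}\mik a^{1/\ell^s}+b^{1/\ell^s}$ for $a,b\meg 0$ then delivers the conclusion. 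The intermediate estimate itself is the crux, and I would prove it by an iterative cut-off argument modelled on the proof of Proposition~\ref{p2.1}: reduce the slot count from $\ell$ down to $2$ one coordinate at a time using the identity $(\ave_y G(y))^\ell=(\ave_y G(y))^2\cdot|\ave_y G|^{\ell-2}$ and splitting the integration region according to whether $|\ave_y G|$ exceeds a threshold $\beta=1+\eta^{1/2}$. The main obstacle is controlling the tail $\{|\ave_y G|>\beta\}$, since $F$ is only majorized by the pseudorandom $\nu$ rather than pointwise; I would handle it by bounding the $\nu$-marginal $\mathcal{N}(\cdot)=\ave[\prod_\omega\nu(\pi_\omega(\cdot))]$ via Fact~\ref{f3.2} and the hypothesis $\|\nu-1\|_{\square_{\ell+2}}\mik\eta$, which gives $\ave[\mathcal{N}^k]=1+o_{\eta\to0}(1)$ for $k\in\{1,2\}$, and then invoking Markov's inequality. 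The small-values contribution is dominated directly by $\beta^{\ell-2}\ave[(\ave_y G)^2]$, which feeds the next iteration; accumulating the resulting $1+O(\eta^{1/2})$ losses over the $s$ coordinates produces the intermediate estimate with total error $o_{\eta\to 0;s,\ell}(1)$.
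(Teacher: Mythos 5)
Parts (a) and (b) of your proposal are fine: the coordinate-by-coordinate H\"{o}lder induction for the Gowers--Cauchy--Schwarz inequality, the multilinear expansion for the triangle inequality, and the padding-with-$1$'s argument for monotonicity are exactly the standard arguments the paper defers to its references for (you should also say a word about definiteness, i.e.\ $\|F\|_{\square_\ell(V^s)}=0\Rightarrow F=0$, which follows from the known case $\ell=2$ together with your monotonicity). For part (c) you take a genuinely different route from the paper. The paper runs a weighted Gowers--Cauchy--Schwarz chain: it defines interpolating quantities $Q_d$ for $d\subseteq[s]$, with $Q_\emptyset=\|F\|_{\square_\ell(V^s)}^{\ell^s}$ and $Q_{[s]}=\|F\|_{\square(V^s)}^{2^s}+o_{\eta\to0;s,\ell}(1)$, in which the ``missing'' vertices are filled with copies of $\nu$, and proves $Q_d^2\mik\big(1+o_{\eta\to0;s,\ell}(1)\big)Q_{d\cup\{i\}}$ by Cauchy--Schwarz; no truncation is needed, and the hypothesis $\|\nu-1\|_{\square_{\ell+2}(V^s)}\mik\eta$ enters only through telescoping estimates for configurations living in $[\ell+1]^s\subseteq[\ell+2]^s$.

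Your truncation scheme for (c) has a genuine gap in the tail estimate. At each stage you must bound $\ave\big[\mathbf{1}_{[\mathcal{N}>\beta]}\,\mathcal{N}^{\ell}\big]$, where $\mathcal{N}$ is the $\nu$-marginal: the exponent is $\ell$, not $2$, so knowing $\ave[\mathcal{N}^k]=1+o(1)$ for $k\in\{1,2\}$ and that $\mathbf{P}(\mathcal{N}>\beta)$ is small does \emph{not} control the integral of the unbounded function $\mathcal{N}^\ell$ over the exceptional set; you need uniform integrability of $\mathcal{N}^\ell$, i.e.\ a moment of order strictly larger than $\ell$, followed by H\"{o}lder, e.g.\ $\ave[\mathbf{1}_A\mathcal{N}^\ell]\mik\mathbf{P}(A)^{2/(\ell+2)}\ave[\mathcal{N}^{\ell+2}]^{\ell/(\ell+2)}$ --- and this is precisely where the exponent $\ell+2$ in the hypothesis is needed, a point your write-up never identifies. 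Two further problems: (i) Fact \ref{f3.2} cannot supply indicator-weighted moment estimates here, because your exceptional set is a subset of the grid $V^{(s-1)\times\ell}$ rather than of $V^s$, so $\mathbf{1}_A$ does not occupy a vertex of any box configuration; you must instead work with plain moments of $\mathcal{N}$ (obtained from part (a) by telescoping) and pay a H\"{o}lder loss. (ii) With $\beta=1+\eta^{1/2}$ and only $\ave[(\mathcal{N}-1)^2]=O_{s,\ell}(\eta)$ available, Chebyshev gives $\mathbf{P}(\mathcal{N}>\beta)=O(1)$, which is useless; you need $\beta-1$ to tend to $0$ strictly more slowly than $\ave[|\mathcal{N}-1|]$, say $\beta=1+\eta^{1/4}$. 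These defects are all repairable, but as written the crucial tail bound does not close, and the paper's Cauchy--Schwarz chain avoids the issue entirely.
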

\begin{proof}
Part (a) for $\ell=2$ is well-known (see \cite[Lemma B.2]{GT2} or \cite[Section~11.1]{TV}). 
The general case can be proved with similar arguments---see \cite[Proposition 2.1]{DKK2} for details. 
Part (b) is an easy consequence of the Gowers--Cauchy--Schwarz inequality.
Part (c) is a special (but more informative) case of \cite[Proposition 7.1]{GT2}. 
For the convenience of the reader we will sketch a proof.

We begin by introducing some pieces of notation. For every $\omega=(\omega_i) \in [\ell]^s$ 
we set $S(\omega)=\{i\in [s]: \omega_i=\ell\}$, and for every (possibly empty) $d\subseteq [s]$ 
let $\Omega'_{\omega,d}$ denote the set of all $\omega'=(\omega'_i)\in [\ell+1]^s$ such that
$\omega'_i\in \{\ell, \ell+1\}$ if $i\in S(\omega)\cap d$, and $\omega'_i=\omega_i$ otherwise. 
Next, for every $d\subseteq [s]$ let $I_d=([s]\times [\ell])\cup (d\times \{\ell+1\})$ and 
define\footnote{In this definition, as in the proof of Proposition \ref{p2.1}, we follow the 
convention that the product of an empty family of functions is equal to the constant function $1$.}
$F_d, G_d\colon V^{I_d}\to \rr$ by the rule
\[ F_d(x')=\!\!\!\! \prod_{\omega'\in \Omega'_{c,d}}\!\!\!\! F\big(\pi_{\omega'}(x')\big) \text{ \ and \ }
G_d(x')=\!\! \prod_{\omega\in A_d} \prod_{\omega'\in \Omega'_{\omega,d}}\!\!\!\!\! F\big(\pi_{\omega'}(x')\big) \!\!
\prod_{\omega\in B_d} \prod_{\omega' \in \Omega'_{\omega,d}}\!\!\!\! \nu\big(\pi_{\omega'}(x')\big) \]
where $c=(\ell,\dots,\ell)\in [\ell]^s$ denotes the sequence of length $s$ taking the constant value $\ell$,
$A_d=\{\omega\in [\ell]^s\setminus\{c\}:  d\subseteq S(\omega)\}$, 
$B_d=\{\omega\in [\ell]^s\setminus\{c\} :  d\nsubseteq S(\omega)\}$ and 
$\pi_{\omega'}(x')=(x'_{i\, \omega'_i})_{i=1}^s$ for every $x'\in V^{I_d}$ 
and every $\omega'=(\omega'_i)\in [\ell+1]^s$ such that 
$\{i\in [s]:\omega'_i=\ell+1\}\subseteq d$. Finally, we set $Q_d=\ave[F_d\, G_d]$.

Now observe that $Q_\emptyset=\ave[\,\prod_{\omega\in [\ell]^s } F\big(\pi_{\omega}(x)\big)\,|\, x\in V^{s\times \ell}]=
\|F\|_{\square_\ell(V^s)}^{\ell^s}$. Moreover,
\begin{eqnarray*}
Q_{[s]} & = &\ave\Big[ \prod_{\omega'\in \{\ell,\ell+1\}^s}\!\!\! F\big(\pi_{\omega'}(x')\big)
\prod_{\omega'\in [\ell+1]^s\setminus \{\ell,\ell+1\}^s}\!\!\! \nu\big(\pi_{\omega'}(x')\big) \,\Big|\, x'\in V^{s\times (\ell+1)}\Big] \\
&= & \|F\|_{\square(V^s)}^{2^s}+o_{\eta\to 0;s,\ell}(1).
\end{eqnarray*}
Indeed, write $Q_{[s]}=Q^{(1)}_{[s]}+Q^{(2)}_{[s]}$ where $Q^{(1)}_{[s]}=\ave\big[\, \prod_{\omega'\in \{\ell,\ell+1\}^s} F\big(\pi_{\omega'}(x')\big)\big]$ and
\[ Q^{(2)}_{[s]}=\ave\Big[ \prod_{\omega'\in \{\ell,\ell+1\}^s}\!\!\! F\big(\pi_{\omega'}(x')\big)
\cdot \Big( \prod_{\omega'\in [\ell+1]^s\setminus \{\ell,\ell+1\}^s}\!\!\!\!\!\!\!\!\!\!\!\nu\big(\pi_{\omega'}(x')\big) -1\Big)\Big]. \]
(Here, the first  expectation is taken over all  $x'\in V^{s\times \{\ell, \ell+1\}}$ and the second expectation is taken over all
$x'\in V^{s\times (\ell+1)}$.) Notice that $Q^{(1)}_{[s]}=\|F\|_{\square(V^s)}^{2^s}$. On the other hand, by a telescopic argument, the
Gowers--Cauchy--Schwarz inequality for the $\square_{\ell+2}(V^s)$-norm and the fact that $|F|\mik \nu$
and $\|\nu-1\|_{\square_{\ell+2}(V^s)}\mik \eta$, we obtain
\[ |Q_{[s]}^{(2)}|\mik \sum_{k=2^{s}+1}^{(\ell+1)^s} \|F\|_{\square_{\ell+2}(V^s)}^{2^s}\cdot
\|\nu-1\|_{\square_{\ell+2}(V^s)}\cdot \|\nu\|_{\square_{\ell+2}(V^s)}^{(\ell+1)^s-k}= o_{\eta\to 0;s,\ell}(1). \]
Finally, by repeated applications of the Cauchy--Schwarz inequality, we see that
\[ Q_d^2\mik \big(1+o_{\eta\to 0;s,\ell}(1)\big)\cdot Q_{d\cup \{i\}} \]
for every (possibly empty) $d\varsubsetneq  [s]$ and every $i\in [s]\setminus d$; in particular, 
we have that $Q_\emptyset^{2^s}\mik \big(1+o_{\eta\to 0; s,\ell}(1)\big)\cdot Q_{[s]}$. Since 
$Q_\emptyset=\|F\|_{\square_\ell(V^s)}^{\ell^s}$, $Q_{[s]}= \|F\|_{\square(V^s)}^{2^s}+o_{\eta\to 0; s,\ell}(1)$ and
\[ \|F\|_{\square(V^s)}\mik \|\nu\|_{\square(V^s)}\mik \|\nu\|_{\square_{\ell+2}(V^s)} \mik 1+\eta \]
the result follows.
%
%
\end{proof}

\subsection*{Acknowledgment} 

The research was supported by the Hellenic Foundation for Research and Innovation 
(H.F.R.I.) under the “2nd Call for H.F.R.I. Research Projects to support
Faculty Members \& Researchers” (Project Number: HFRI-FM20-02717).


\end{document}